\newcommand*{\abs}[1]{\left\lvert#1\right\rvert}
\newcommand*{\set}[1]{\left\{#1\right\}}
\newcommand{\prob}{\mathbf{P}}
\newcommand{\E}{\mathbf{E}}
\newcommand{\ND}{\mathcal N}
\newcommand{\real}{\mathbb R}
\newcommand{\dto}{\xrightarrow{d}}
\DeclareMathOperator{\Cov}{Cov}
\DeclareMathOperator{\hyper}{\sideset{_2}{_1}{\mathop F}}
\newtheorem{theorem}{Theorem}[section]
\newtheorem{lemma}[theorem]{Lemma}
\newtheorem{proposition}[theorem]{Proposition}
\theoremstyle{remark}
\newtheorem{remark}[theorem]{Remark}
\numberwithin{equation}{section}
\begin{document}
\title[Parameter estimators in tempered fractional Vasicek model]{Asymptotic properties of parameter estimators in~Vasicek model driven by tempered fractional Brownian motion}

\author{Yuliya Mishura$^{1,2}$, Kostiantyn Ralchenko$^{1,3}$, and Olena Dehtiar$^1$}

\address{$^1$Taras Shevchenko National University of Kyiv, Ukraine}
\address{$^2$M\"alardalen University, V\"aster{\aa}s, Sweden}
\address{$^3$University of Vaasa, Finland}

\email{yuliyamishura@knu.ua, kostiantynralchenko@knu.ua, dehtiar.olena@knu.ua}

\thanks{The first  author is supported by the Swedish Foundation for Strategic Research, grant  no.\ UKR22-0017.}
\thanks{The second author is supported by the Research
Council of Finland, decision number 359815.}
\thanks{The first and the second authors acknowledge that the present research is carried out within the frame and support of the ToppForsk project no.\ 274410 of the Research Council of Norway with the title STORM: Stochastics for Time-Space Risk Models.}

\subjclass{60G15, 60G22, 62F12, 62M09}

\keywords{Tempered fractional process; tempered fractional Vasicek model; parameter estimation; asymptotic distribution}

\begin{abstract}
The paper focuses on the Vasicek model driven by a tempered fractional Brownian motion. We derive the asymptotic distributions of the least-squares estimators (based on continuous-time observations) for the unknown drift parameters. This work continues the investigation by Mishura and Ralchenko (Fractal and Fractional, 8(2:79), 2024), where these estimators were introduced and their strong consistency was proved.
\end{abstract}

\maketitle

\section{Introduction} 

The main goal of this paper is to establish the asymptotic distributions of the estimators of the drift parameters  in the tempered fractional Vasicek model, or, in other words, in the Vasicek model involving tempered fractional Brownian motion (TFBM) of the first kind, as the driver. This tempered process was introduced and studied   in \cite{TFBM, TFBM-integration}. Concerning the estimators, we use the estimators constructed in \cite{MR24}, where we established their strong consistency. Moreover, we applied the main results about asymptotic normality of the drift parameter's estimators in the Vasicek model with the Gaussian driver of the unspecified form, but satisfying several assumptions, proved in Theorem 3.2 of \cite{Es-Sebaiy}.
However, not all conditions of the specified theorem are satisfied for our model; therefore, direct application of  results from \cite{Es-Sebaiy} was impossible, and we had to significantly modify the main proofs. 
More information about the relation between our assumptions and those of \cite{Es-Sebaiy} is provided in Appendix \ref{app:es-sebaiy}.

The tempered fractional Vasicek model is described by the following stochastic differential equation:
\begin{equation}\label{eq:tfvm}
d Y_t = \left(a + b Y_{t}\right) d t + \sigma d B_{H,\lambda}(t), \quad t \geq 0, 
\quad Y_{0} = y_0,
\end{equation}
where $a\in\real$, $b>0$, $\sigma>0$, $y_0\in\real$ are constants, and $B_{H,\lambda} = \{B_{H,\lambda}, t\ge0\}$ is a tempered fractional Brownian motion introduced in \cite{TFBM}.

We focus on the case where $b > 0$ and continue to investigate the asymptotic behavior of the least squares estimator of the unknown parameters 
$(a,b)$. In \cite{MR24}, the strong consistency of the estimator was proved. In the present paper, we determine its asymptotic distribution. The main result indicates that, similar to the non-ergodic fractional Vasicek model studied in \cite{Es-Sebaiy}, the estimator of $a$ is asymptotically normal, whereas the estimator of 
$b$ follows a Cauchy-type asymptotic distribution. However, in the model \eqref{eq:tfvm}, the estimators of $a$ and $b$ are not asymptotically independent.

Our proofs rely on the asymptotic behavior of the variance of TFBM and the asymptotic growth with probability one of its sample paths. These properties were established in \cite{Azmoodeh} and \cite{MR24}, respectively.

Parameter estimation for a model similar to \eqref{eq:tfvm} but driven by fractional Brownian motion, known as the fractional Vasicek model, has been extensively studied for over 20 years. The case 
$a=0$, known as the fractional Ornstein--Uhlenbeck process, has been particularly well-studied. Drift parameter estimation for this case began in 2002 with the maximum likelihood estimation (MLE) discussed in \cite{kleptsynabreton2002, TudorViens}, and the asymptotic and exact distributions of the MLE were later investigated in \cite{tanaka2013, tanaka2015}.

Alternative approaches to drift parameter estimation for the fractional Ornstein--Uhlen\-beck process are found in \cite{rachidkhalifayoussef2011, yaozhong2010, HuNuZhou, kumirase}. Since the asymptotic behavior of this process and the estimators is significantly affected by the sign of the drift parameter, hypothesis testing methods for it were developed in \cite{Kukush17, Moers12}. For a comprehensive survey on drift parameter estimation in fractional diffusion models, see \cite{MR-survey}, and for a detailed presentation, we refer to the book \cite{KMR2017}.

Drift parameter estimation for Ornstein--Uhlenbeck processes driven by more general and related Gaussian processes was considered in \cite{yonghongjuan2021, mohamedkhalifayoussef2016, guangjunxiuweilitan2016, mendy2018, MRS23}. A similar problem for complex-valued fractional Ornstein--Uhlenbeck processes with fractional noise was investigated in \cite{yonghongjuanzhi2017}. In \cite{guangjunqian2019}, the least squares estimator for the drift of Ornstein--Uhlenbeck processes with small fractional L\'evy noise was constructed and studied.

In the general case of a fractional Vasicek model with two unknown drift parameters, the least squares and ergodic-type estimators were studied in \cite{Lohvinenko-Ralchenko-Zhuchenko-2016, weilinjun2018, weilinjun2019}, while the corresponding MLEs were investigated in \cite{Lohvinenko-Ralchenko-2017, Lohvinenko-Ralchenko-2018, Lohvinenko-Ralchenko-2019, tanaka2020maximum}. In \cite{shengfengji2018}, the least squares estimators of the Vasicek-type model driven by sub-fractional Brownian motion were studied. The same problem for the case of more general Gaussian noise (including fractional, sub-fractional, and bifractional Brownian motions) was investigated in \cite{Es-Sebaiy}. Least squares estimation of the drift parameters for the approximate fractional Vasicek process was investigated in \cite{jixiaxiofangchao2023}.

Several papers are devoted to the model \eqref{eq:tfvm} with non-Gaussian noises. In particular, drift parameter estimation for a Vasicek model driven by a Hermite process was studied in \cite{nourdindiu2017}; Vasicek-type models with L\'evy processes were considered in \cite{es-sebaiy_al-foraih_alazemi_2021, reiichiro}.

It is worth mentioning that the theory of parameter estimation for stochastic differential equations driven by a standard Wiener process, especially for classical Ornstein--Uhlenbeck and Vasicek models, is now well-developed. For comprehensive resources, see the books \cite{Bishwal, Iacus, Kutoyants, Liptser-stat2}. More recent results in this direction can be found in \cite{KMR2017} and the papers \cite{huixing2015, huihuizhou2020, Prykhodko24, shimizu2010, TangChen}. Additionally, parameter estimation for the reflected Ornstein--Uhlenbeck process was studied in \cite{zangzhang2019}, and for the threshold Ornstein--Uhlenbeck process in \cite{jaozhongjuejan2022}.

The structure of this paper is as follows.
In the beginning of Section~~\ref{sec:main}, we recall the definition and properties of the TFBM. Subsequently, we introduce the tempered fractional Vasicek model and the least-squares-type estimators for the drift parameters, and we formulate the main result concerning the asymptotic distributions of these estimators.
All proofs are provided in Section~\ref{sec:proofs}.
In Subsection~\ref{ssec:repres}, we express our estimators in terms of three Gaussian processes, which are three different integrals involving TFBM.
Next, in Subsection~\ref{ssec:as-norm-aux}, we determine the joint asymptotic distribution of these Gaussian processes.
This enables us to derive the proof of the main theorem, which is detailed in Subsection~\ref{ssec:proof-main}.
The paper is supplemented with two appendices.
In Appendix~\ref{app:es-sebaiy}, we discuss the relation between our model and the conditions presented in \cite{Es-Sebaiy}.
Appendix~\ref{app:special} provides brief information on the special functions that arise in the calculation of the asymptotic variances of the estimators.

\section{Model description and main result}
\label{sec:main}

\subsection{Tempered fractional Brownian motion}
Let $W = \set{W_x, x\in\real}$ be a two-sided Wiener process, $H>0$, $\lambda>0$.
According to \cite{TFBM}, a tempered fractional Brownian motion (TFBM) is a zero mean stochastic process $B_{H,\lambda}= \{B_{H,\lambda}(t), t\ge0\}$ defined by the following Wiener integral
\begin{equation*}
B_{H,\lambda}(t) = \int_{\real} \left[\exp\set{-\lambda  (t-x)_{+}}(t-x)^{H-\frac{1}{2}}_{+}-\exp\set{-\lambda(-x)_{+}}(-x)^{H-\frac{1}{2}}_{+}\right]dW_x.
\end{equation*}
Its covariance function has the following form \cite{TFBM}
\begin{equation}\label{eq:cov}
\Cov [B_{H,\lambda}(t), B_{H,\lambda}(s)]
= \frac{1}{2} \left(C_{t}^2 t^{2H}+ C_{s}^2 s^{2H} - C_{\abs{t-s}}^2 \abs {t-s}^{2H}\right),
\end{equation}
with
\begin{equation}\label{eq:Ct}
C_t^2 = \frac{2\Gamma(2H)}{(2\lambda t)^{2H}}-\frac{2\Gamma(H+\frac{1}{2})}{\sqrt{\pi}\,(2\lambda t)^{H}} K_H(\lambda t),
\end{equation}
where $K_\nu(z)$ is the modified Bessel function of the second kind, see Appendix \ref{app:special}.

The variance function of TFBM with parameters $H > 0$ and $\lambda > 0$ satisfies
\begin{equation}\label{eq:TFBMI-asymp}
\lim_{t\to+\infty} \E \left[B_{H,\lambda}(t)\right]^2 =
\lim_{t\to+\infty} C_{t}^2 t^{2H} =
\frac{2\Gamma(2H)}{(2\lambda)^{2H}} \eqqcolon \alpha_{H,\lambda}^2,
\end{equation}
see \cite[Proposition 2.4]{Azmoodeh}.

Furthermore, it was proved in \cite[Theorem 3]{MR24} that
for any $\delta>0$, there exists a non-negative random variable $\xi = \xi(\delta)$ such that for all $t>0$
\begin{equation}\label{eq:tfbm-growth}
\sup_{s\in[0,t]}\abs{B_{H,\lambda}(s)}\le \left(t^\delta \vee 1\right)\xi
\quad\text{a.s.},
\end{equation}
and there exist positive constants 
$C_1 = C_1(\delta)$ and $C_2 = C_2(\delta)$
such that for all $u>0$
\begin{equation*}
\prob(\xi>u) \le C_1 e^{-C_2 u^2}.
\end{equation*}

\subsection{Parameter estimation in the tempered fractional Vasicek model}
We focus in this paper on the drift parameter estimation for the tempered fractional Vasicek model, which is described by the following stochastic differential equation:
\begin{equation}\label{Main-SDE}
Y_t = y_0 + \int_0^t (a + b Y_s)\,ds + \sigma B_{H,\lambda}(t),
\quad t > 0,
\quad Y_0 = y_0,
\end{equation}
where
$a\in\real$, $b>0$, $\sigma>0$, $y_0\in\real$.
The solution $Y = \{Y_t, t\ge0\}$ is given explicitly by
\begin{equation}\label{eq:SDE-solution}
Y_t = \left(y_0 + \frac{a}{b}\right) e^{bt} - \frac{a}{b} + \sigma \int_0^t e^{b(t-s)}dB_{H,\lambda}(s),
\end{equation}
where 
the integral is defined by the integration by parts:
\begin{equation}\label{eq:int}
\int_0^t e^{b(t-s)}dB_{H,\lambda}(s)
\coloneqq
B_{H,\lambda}(t) + b \int_0^t e^{b(t-s)} B_{H,\lambda}(s) ds.
\end{equation}

Let us consider the estimation of unknown drift parameter $\theta = (a,b) \in\real\times(0,\infty)$ in the model \eqref{Main-SDE}.
Following \cite{MR24}, we define the estimator $\hat\theta_T = (\hat a_T, \hat b_T)$ as follows
\begin{gather}
\hat a_T = \frac{\left(Y_T - y_0\right) \left(\int_0^T Y_t^2 dt - \frac12 \left(Y_T+y_0\right) \int_0^T Y_t dt\right)}{T \int_0^T Y_t^2 dt - \left(\int_0^T Y_t dt\right)^2},
\label{eq:a-est}
\\
\hat b_T = \frac{\left(Y_T - y_0\right) \left(\frac12 T \left(Y_T + y_0\right)  - \int_0^T Y_t dt\right)}{T \int_0^T Y_t^2 dt - \left(\int_0^T Y_t dt\right)^2}.
\label{eq:b-est}
\end{gather}
According to 
\cite[Theorem 6]{MR24},
$(\hat a_T, \hat b_T)$ is a strongly consistent estimator of the parameter $(a,b)$ as $T\to\infty$.
The purpose of the present paper is to find asymptotic distributions of $\hat a_T$ and $\hat b_T$. More precisely, we shall prove that $\hat a_T$ is asymptotically normal, and $\hat b_T$ has asymptotic Cauchy-type distribution.

\subsection{Main result}
Let us introduce the notations 
\begin{equation}\label{eq:alphabeta}
\alpha_{H,\lambda}^2 = \frac{2\Gamma(2H)}{(2\lambda)^{2H}}
\quad \text{and} \quad
\beta_{H,\lambda,b}^2 = \frac{b}{2} \int_{0}^{\infty} \exp\{-b u \} C^2_{u} u^{2H}du.
\end{equation}
The following theorem is the main result of the paper. 

\begin{theorem}\label{th:main}

The   estimators $\hat a_T$ and $\hat b_T$ from \eqref{eq:a-est} and 
\eqref{eq:b-est}, respectively, have the following asymptotic properties. 
\begin{enumerate}[(i)]
\item
The estimator $\hat a_T$ is asymptotically normal:
\begin{equation}\label{eq:a-asnorm}
T\left(\hat a_T - a\right) \dto 
\ND \left( 0, \sigma^2 \alpha_{H,\lambda}^2 \right)
\quad
\text{as } T\to\infty.
\end{equation}

\item
The estimator $\hat b_T$ has asymptotic Cauchy-type distribution:
\[
e^{b T}\left(\hat b_T - b\right) \dto \frac{\eta_1}{\eta_2},
\quad
\text{as } T\to\infty,
\]
where
$\eta_1 \simeq \ND(0, 4 b^2 \sigma^2 \beta_{H,\lambda,b}^2)$
and
$\eta_2 \simeq \ND(y_0 + \frac{a}{b}, \sigma^2 \beta_{H,\lambda,b}^2)$
are independent normal random variables.
\end{enumerate}

\end{theorem}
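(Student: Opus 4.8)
The plan is to turn the ratio-type estimators into a small number of Gaussian functionals of the driver and then identify their joint limit law.

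\emph{Step 1 (error representation).} The estimators \eqref{eq:a-est}--\eqref{eq:b-est} are exactly the solution of the normal equations
\[
Y_T-y_0=\hat a_T\,T+\hat b_T\int_0^T Y_t\,dt,\qquad
\tfrac12\bigl(Y_T^2-y_0^2\bigr)=\hat a_T\int_0^T Y_t\,dt+\hat b_T\int_0^T Y_t^2\,dt,
\]
where $\int_0^T Y_t\,dY_t=\tfrac12(Y_T^2-y_0^2)$ is understood pathwise (cf.\ \eqref{eq:int}). Subtracting the two identities satisfied by the true parameters --- namely \eqref{Main-SDE} itself and the relation $\tfrac12(Y_T^2-y_0^2)=a\int_0^T Y_t\,dt+b\int_0^T Y_t^2\,dt+\sigma\int_0^T Y_t\,dB_{H,\lambda}(t)$ obtained by multiplying the SDE by $Y_t$ and integrating --- I obtain the linear system $M_T(\hat a_T-a,\hat b_T-b)^\top=\sigma\bigl(B_{H,\lambda}(T),\int_0^T Y_t\,dB_{H,\lambda}(t)\bigr)^\top$, where $M_T$ is the symmetric matrix with entries $T$, $\int_0^T Y_t\,dt$, $\int_0^T Y_t^2\,dt$ and determinant $T\int_0^T Y_t^2\,dt-\bigl(\int_0^T Y_t\,dt\bigr)^2$. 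Cramer's rule then writes $\hat a_T-a$ and $\hat b_T-b$ as explicit ratios of the Gaussian \emph{score} terms $B_{H,\lambda}(T)$, $\int_0^T Y_t\,dB_{H,\lambda}(t)$ and the \emph{information} functionals $\int_0^T Y_t\,dt$, $\int_0^T Y_t^2\,dt$.

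\emph{Step 2 (reduction to three Gaussian integrals).} From \eqref{eq:SDE-solution} and the growth bound \eqref{eq:tfbm-growth} I would show $e^{-bT}Y_T\to\eta_2:=(y_0+\tfrac ab)+\sigma\zeta$ almost surely, with $\zeta:=\int_0^\infty e^{-bs}\,dB_{H,\lambda}(s)$, and consequently $e^{-bT}\int_0^T Y_t\,dt\to\eta_2/b$, $e^{-2bT}\int_0^T Y_t^2\,dt\to\eta_2^2/(2b)$ and $T^{-1}e^{-2bT}\det M_T\to\eta_2^2/(2b)$. Substituting \eqref{eq:SDE-solution} into the second score term and using that the weight concentrates near $t=T$, I reduce it to $e^{-bT}\int_0^T Y_t\,dB_{H,\lambda}(t)=\eta_2 J_T+o_{\prob}(1)$, where $J_T:=\int_0^T e^{-b(T-s)}\,dB_{H,\lambda}(s)$. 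Thus everything is governed by the Gaussian triple $\bigl(B_{H,\lambda}(T),\,\tilde V_T,\,J_T\bigr)$ with $\tilde V_T:=\int_0^T e^{-bs}\,dB_{H,\lambda}(s)\to\zeta$ a.s.

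\emph{Step 3 (joint limit of the triple).} Since each component is a Wiener integral of TFBM, the vector is Gaussian, so joint convergence in distribution follows once means and covariances converge. Writing the variance function as $\E[B_{H,\lambda}(t)]^2=C_t^2t^{2H}=:v(t)$ and using the stationarity of increments (so that $\E[B_{H,\lambda}(t)-B_{H,\lambda}(s)]^2=v(\abs{t-s})$) together with \eqref{eq:cov}, \eqref{eq:TFBMI-asymp} and the definitions \eqref{eq:alphabeta}, I would compute $\operatorname{Var}(B_{H,\lambda}(T))\to\alpha_{H,\lambda}^2$, $\operatorname{Var}(\tilde V_T)\to\beta_{H,\lambda,b}^2$ and $\operatorname{Var}(J_T)\to\beta_{H,\lambda,b}^2$ (the last two via the identity $\tfrac{b^2}{2}\iint_{(0,\infty)^2}e^{-b(s+r)}[v(s)+v(r)-v(\abs{s-r})]\,ds\,dr=\tfrac b2\int_0^\infty e^{-bu}v(u)\,du$), $\Cov(B_{H,\lambda}(T),J_T)\to\beta_{H,\lambda,b}^2$, and --- the decisive point --- $\Cov(\tilde V_T,J_T)\to0$. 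The last relation expresses the asymptotic independence of the ``early'' limit $\zeta$ (which builds $\eta_2$) and the ``late'' limit $\Xi:=\lim_T J_T$; setting $\eta_1:=2b\sigma\,\Xi$ yields the independent Gaussian pair of the theorem, $\eta_1\simeq\ND(0,4b^2\sigma^2\beta_{H,\lambda,b}^2)$ and $\eta_2\simeq\ND(y_0+\tfrac ab,\sigma^2\beta_{H,\lambda,b}^2)$.

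\emph{Step 4 (assembly) and the main difficulty.} Plugging the Step~2 limits into the Cramer's-rule formulas and applying Slutsky's lemma and the continuous mapping theorem, the $\eta_2$ factors cancel in the intercept error, giving $T(\hat a_T-a)=\sigma\bigl(B_{H,\lambda}(T)-2J_T\bigr)+o_{\prob}(1)$; since $\operatorname{Var}(B_{H,\lambda}(T)-2J_T)\to\alpha_{H,\lambda}^2+4\beta_{H,\lambda,b}^2-4\beta_{H,\lambda,b}^2=\alpha_{H,\lambda}^2$ and the expression is Gaussian, this yields \eqref{eq:a-asnorm}. For the slope, $e^{bT}(\hat b_T-b)=2b\sigma J_T/\eta_2+o_{\prob}(1)\dto 2b\sigma\,\Xi/\eta_2=\eta_1/\eta_2$, the claimed Cauchy-type limit. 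The main obstacle I anticipate is Step~3: the precise covariance asymptotics of the triple, in particular the vanishing cross-covariance $\Cov(\tilde V_T,J_T)\to0$ and the identity $\Cov(B_{H,\lambda}(T),J_T)\to\beta_{H,\lambda,b}^2$, which require careful manipulation of \eqref{eq:cov}--\eqref{eq:Ct}, the stationary-increment structure, and the Bessel-function asymptotics of Appendix~\ref{app:special}. A secondary technical point is the rigorous justification of $e^{-bT}\int_0^T Y_t\,dB_{H,\lambda}(t)=\eta_2 J_T+o_{\prob}(1)$, where the slowly varying random factor is frozen at its limit; this relies on the uniform growth estimate \eqref{eq:tfbm-growth} to control the remainder and on the pathwise meaning \eqref{eq:int} of the integral, $B_{H,\lambda}$ not being a semimartingale.
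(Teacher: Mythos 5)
Your proposal is correct in substance and reaches the same limiting objects as the paper, but it organizes the reduction differently. The paper never passes through the normal equations: it expands the numerator $F_T$ of $e^{bT}(\hat b_T-b)$ directly into four blocks $F_{1,T},\dots,F_{4,T}$ and painstakingly shows that everything except $\sigma V_T\left(y_0+\frac{a}{b}+b\sigma Z_T\right)$ is an a.s.\ vanishing remainder (Lemma~\ref{l:delta-b}), then obtains $T(\hat a_T-a)=b\sigma U_T-\sigma V_TP_T+Q_T$ from the algebraic identity $T\hat a_T=Y_T-y_0-\hat b_T\int_0^TY_t\,dt$ (Lemma~\ref{l:delta-a}). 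Your Cramer's-rule formulation with the score vector $\bigl(B_{H,\lambda}(T),\int_0^TY_t\,dB_{H,\lambda}(t)\bigr)$ is a cleaner bookkeeping device for the same decomposition: your triple $(B_{H,\lambda}(T),\tilde V_T,J_T)$ is an invertible linear image of the paper's $(Z_T,U_T,V_T)$ (indeed $J_T=V_T$, $bU_T=B_{H,\lambda}(T)-V_T$, $\tilde V_T=bZ_T+e^{-bT}B_{H,\lambda}(T)$), your limiting covariances agree with Lemmas~\ref{l:var-Z}--\ref{l:cov-UV}, and your final expressions $\sigma(B_{H,\lambda}(T)-2J_T)$ and $2b\sigma J_T/\eta_2$ coincide with $b\sigma U_T-\sigma V_T$ and $2b\sigma V_T/\zeta$. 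Your time-reversal/stationary-increments argument for $\operatorname{Var}(J_T)\to\beta_{H,\lambda,b}^2$ is also a nice shortcut compared with the paper's expansion of $\E V_T^2$ via $V_T=B_{H,\lambda}(T)-bU_T$.

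Two points need care. First, the identity $\tfrac12(Y_T^2-y_0^2)=a\int_0^TY_t\,dt+b\int_0^TY_t^2\,dt+\sigma\int_0^TY_t\,dB_{H,\lambda}(t)$ is a genuine pathwise chain rule only when the paths are H\"older of order above $\tfrac12$, i.e.\ essentially only for $H>\tfrac12$; the model allows all $H>0$, so for $H\le\tfrac12$ you must treat this identity as the \emph{definition} of $\int_0^TY_t\,dB_{H,\lambda}(t)$ (which is legitimate, since the estimators \eqref{eq:a-est}--\eqref{eq:b-est} are defined without any stochastic integral and the normal equations are then pure algebra), and say so explicitly. Second, what you call a ``secondary technical point'' --- the rigorous proof that $e^{-bT}\int_0^TY_t\,dB_{H,\lambda}(t)=\eta_2J_T+o_{\prob}(1)$ --- is in fact where the paper spends almost all of its effort (the two-page remainder analysis of Lemma~\ref{l:delta-b}, built on \eqref{eq:tfbm-growth} and the a.s.\ limits \eqref{eq:Y-asymp}--\eqref{eq:int-Yt-asymp}); ``the weight concentrates near $t=T$'' is the right heuristic but not yet a proof, so in a full write-up this step, not the covariance computations, is the main labor.
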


\begin{remark}[Joint distribution of the estimators]
\label{rem:joint}
Unlike the case of the Vasicek model driven by fractional Brownian motion (see \cite[Proposition 4.1]{Es-Sebaiy}), the estimators $\hat a_T$ and $\hat b_T$ are \emph{not} asymptotically independent.
More precisely, the following convergence holds
\begin{equation}\label{eq:joint}
\begin{pmatrix}
T\left(\hat a_T - a\right)
\\
e^{b T}\left(\hat b_T - b\right)
\end{pmatrix}
\dto
\begin{pmatrix}
b\sigma \xi_2 - \sigma \xi_3
\\
\frac{2b\sigma\xi_3}{y_0 + \frac{a}{b} + b\sigma\xi_1}
\end{pmatrix},
\end{equation}
where the random vector $(\xi_1, \xi_2, \xi_3)$ has a Gaussian distribution $\ND (\mathbf 0, \Sigma)$ with the covariance matrix $\Sigma$ defined in Proposition \ref{prop:ZUV-asymp} below.
We see that the normal random variables $\xi_1 \simeq \ND (0, b^{-2}\beta_{H,\lambda,b}^2)$ and $\xi_3 \simeq \ND (0,\beta_{H,\lambda,b}^2)$ are independent, and so are $\xi_2 \simeq \ND (0, b^{-2}( \alpha_{H,\lambda}^2-\beta_{H,\lambda,b}^2))$ and $\xi_3$.
However, there is a correlation between $\xi_1$ and $\xi_2$, namely
$\Cov(\xi_1, \xi_2) = b^{-2} \beta_{H,\lambda,b}^2$.
\end{remark}

\begin{remark}[Representation for $\beta_{H,\lambda,b}^2$ via hypergeometric function]
The constant $\beta_{H,\lambda,b}^2$ can be represented in an alternative form, which may be more suitable for its numerical computation.
Using formula \eqref{eq:Ct} for $C_t$, we can rewrite it in the following form
\begin{align}
\beta_{H,\lambda,b}^2 &= \frac{b}{2} \int_{0}^{\infty} \exp\{-b t \} \left(\frac{2\Gamma(2H)}{(2\lambda)^{2H}}-\frac{2\Gamma(H+\frac{1}{2}) t^H}{\sqrt{\pi}\,(2\lambda)^{H}} K_H(\lambda t)\right) dt
\notag\\
&= \frac{\Gamma(2H)}{(2\lambda)^{2H}} 
- \frac{b \Gamma(H+\frac{1}{2})}{\sqrt{\pi}\,(2\lambda)^{H}} \int_{0}^{\infty} \exp\{-b t \}  t^H K_H(\lambda t) dt.
\label{eq:beta2}
\end{align}
Furthermore, by \cite[formula 6.621-3]{Gradshteyn},
\begin{multline}\label{eq:beta3}
\int_{0}^{\infty} \exp\{-b t \}  t^H K_H(\lambda t) dt
\\*
= \frac{\sqrt{\pi} (2\lambda)^H}{(b + \lambda)^{2H + 1}}
\,\frac{\Gamma(2H + 1)}{\Gamma(H+\frac32)}
\hyper\left(2H+1, H+\tfrac12; H+\tfrac32; \frac{b - \lambda}{b + \lambda}\right),
\end{multline}
where $\hyper$ denotes the Gauss hypergeometric function, see Appendix \ref{app:special}.
Hence, combining \eqref{eq:beta2}--\eqref{eq:beta3} and using the relation $\Gamma(H+\frac32)= (H+\frac12) \Gamma (H+\frac12)$,
we arrive at
\begin{equation}\label{eq:repr-beta-hyp}
\begin{split}
\beta_{H,\lambda,b}^2 &= \frac{\Gamma(2H)}{(2\lambda)^{2H}} 
- \frac{2b \Gamma(2H+1)}{(b + \lambda)^{2H + 1}(2H+1)}
\hyper\left(2H+1, H+\tfrac12; H+\tfrac32; \frac{b - \lambda}{b + \lambda}\right)
\\
&= \frac{1}{2} \alpha_{H,\lambda}^2
- \frac{2b \Gamma(2H+1)}{(b + \lambda)^{2H + 1}(2H+1)}
\hyper\left(2H+1, H+\tfrac12; H+\tfrac32; \frac{b - \lambda}{b + \lambda}\right).
\end{split}
\end{equation}
\end{remark}

\section{Proofs}
\label{sec:proofs}
Let us introduce the following processes
\begin{gather}
Z_t \coloneqq \int_0^t e^{-bs} B_{H,\lambda}(s)ds,
\quad
U_t = e^{-bt} \int_0^t e^{bs} B_{H,\lambda}(s) ds,
\label{eq:ZtUt}
\\
V_t \coloneqq e^{-bt} \int_0^t e^{bs} dB_{H,\lambda}(s)
= B_{H,\lambda}(t) - b U_t.
\label{eq:Vt}
\end{gather}
The proof of the main result will be conducted according to the following scheme.
First, in subsection \ref{ssec:repres} we express $T(\hat a_T - a)$ and $e^{b T}(\hat b_T - b) $ via the processes $Z$, $U$ and $V$ and remainder terms, vanishing at infinity.
Then in subsection \ref{ssec:as-norm-aux} we find the joint asymptotic distribution of the Gaussian vector $(Z_T, U_T, V_T)$ as $T \to \infty$. Finally, using these results along with the Slutsky theorem, we derive the limits in distribution for $T(\hat a_T - a)$ and $e^{b T}(\hat b_T - b)$ as $T \to \infty$ in subsection~\ref{ssec:proof-main}.

\subsection{Representation of the estimators}
\label{ssec:repres}

Let us recall some well-known facts about the convergence of integrals involving tempered fractional Vasicek process $Y$.
It was proved in \cite{MR24} that the random variable
\begin{equation}\label{eq:Zinf}
Z_\infty \coloneqq \int_0^\infty e^{-bs} B_{H,\lambda}(s)\, ds
\end{equation}
is well defined and
the following convergences hold a.s.\ as $T\to\infty$:
\begin{gather}\label{eq:Y-asymp}
e^{-bT} Y_T \to \zeta,
\\
\label{eq:int-Y-asymp}
e^{-bT} \int_0^T Y_t dt \to \frac{1}{b} \zeta,
\\
\label{eq:int-Y2-asymp}
e^{-2bT} \int_0^T Y_t^2 dt \to \frac{1}{2b} \zeta^2,
\\
\label{eq:int-Yt-asymp}
T^{-1}e^{-bT} \int_0^T Y_t t\, dt \to \frac{1}{b} \zeta,
\end{gather} 
where
\begin{equation}\label{eq:zeta}
\zeta \coloneqq y_0 + \frac{a}{b} + b \sigma Z_\infty,
\end{equation}
see \cite[Lemma 6]{MR24}.

Now we are ready to formulate and prove an auxiliary lemma, which is crucial for the proof of the main theorem.
The lemma provides a representation of the estimator $\hat b_T$ via the integrals $Z_T$ and $V_T$ defined in \eqref{eq:ZtUt}--\eqref{eq:Vt}.
\begin{lemma}\label{l:delta-b}
For all $T>0$
\begin{equation}\label{eq:repr-b}
e^{bT} (\hat b_T - b) = \frac{\sigma V_T \left(y_0 + \frac{a}{b} + b\sigma Z_T\right)}{D_T} +  R_T,
\end{equation}
where
\begin{equation}
\label{eq:Dt}
D_T \coloneqq e^{-2bT} \left(\int_0^T Y_t^2 dt - \frac1T \left(\int_0^T Y_t dt\right)^2\right)
\to \frac{1}{2b} \zeta^2
\quad \text{a.s., as } T \to\infty,
\end{equation}
and
\begin{equation}\label{eq:RT}
R_T \to 0 \quad \text{a.s., as } T \to\infty.
\end{equation}
\end{lemma}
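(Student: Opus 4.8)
The plan is to first turn the statement into an exact algebraic identity valid for every $T>0$, and then show that all contributions except the displayed main term are negligible.

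\emph{Step 1 (exact identity).} I would substitute the model \eqref{Main-SDE} into the explicit formula \eqref{eq:b-est}. Integrating \eqref{Main-SDE} gives $Y_T-y_0 = aT + b\int_0^T Y_s\,ds + \sigma B_{H,\lambda}(T)$, and applying the chain rule for the pathwise integral to $Y_t^2$ gives $\tfrac12(Y_T^2-y_0^2)= a\int_0^T Y_s\,ds + b\int_0^T Y_s^2\,ds + \sigma\int_0^T Y_s\,dB_{H,\lambda}(s)$. Inserting these two relations into the numerator of \eqref{eq:b-est}, the terms carrying $a$ and $b$ cancel, leaving the exact identity
\[
\hat b_T - b = \frac{\sigma\,M_T}{T\int_0^T Y_t^2\,dt - \left(\int_0^T Y_t\,dt\right)^2},
\qquad
M_T := T\int_0^T Y_t\,dB_{H,\lambda}(t) - \left(\int_0^T Y_t\,dt\right)B_{H,\lambda}(T).
\]
Since the denominator equals $T e^{2bT} D_T$ by the definition in \eqref{eq:Dt}, this already yields $e^{bT}(\hat b_T-b) = \frac{\sigma}{D_T}\cdot\frac{M_T}{T e^{bT}}$.

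\emph{Step 2 (extraction of the main term).} I would rewrite the solution \eqref{eq:SDE-solution} as $Y_t = e^{bt}\Phi_t - \frac ab + \sigma B_{H,\lambda}(t)$ with $\Phi_t := y_0 + \frac ab + b\sigma Z_t$, which follows by integrating $\int_0^t e^{-bs}\,dB_{H,\lambda}(s)$ by parts as in \eqref{eq:int}. Substituting this into $\int_0^T Y_t\,dB_{H,\lambda}(t)$ and using $\int_0^T B_{H,\lambda}\,dB_{H,\lambda} = \tfrac12 B_{H,\lambda}(T)^2$ reduces the stochastic integral to $\int_0^T e^{bt}\Phi_t\,dB_{H,\lambda}(t)$ plus lower-order pieces. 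Because $\Phi_t - \Phi_T = -b\sigma(Z_T - Z_t)$, I can peel off the leading factor,
\[
\int_0^T e^{bt}\Phi_t\,dB_{H,\lambda}(t) = \Phi_T\, e^{bT} V_T - b\sigma\int_0^T e^{bt}\bigl(Z_T - Z_t\bigr)\,dB_{H,\lambda}(t),
\]
using $e^{bT}V_T = \int_0^T e^{bt}\,dB_{H,\lambda}(t)$ from \eqref{eq:Vt}. Collecting terms and dividing by $Te^{bT}$ produces $\frac{M_T}{Te^{bT}} = \Phi_T V_T + \rho_T$, hence the representation \eqref{eq:repr-b} with main term $\frac{\sigma V_T(y_0 + \frac ab + b\sigma Z_T)}{D_T}$ and remainder $R_T := \frac{\sigma}{D_T}\rho_T$, where $\rho_T$ is a linear combination of the four terms $e^{-bT}\int_0^T e^{bt}(Z_T-Z_t)\,dB_{H,\lambda}$, $e^{-bT}B_{H,\lambda}(T)$, $e^{-bT}B_{H,\lambda}(T)^2$, and $\frac{1}{Te^{bT}}(\int_0^T Y_t\,dt)B_{H,\lambda}(T)$.

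\emph{Step 3 (convergence).} The limit $D_T\to\frac1{2b}\zeta^2$ is immediate from $D_T = e^{-2bT}\int_0^T Y_t^2\,dt - \frac1T\bigl(e^{-bT}\int_0^T Y_t\,dt\bigr)^2$ together with \eqref{eq:int-Y-asymp} and \eqref{eq:int-Y2-asymp}. Since this limit is a.s.\ nonzero (as $\zeta$ is a nondegenerate Gaussian variable by \eqref{eq:Zinf} and \eqref{eq:zeta}), it remains to prove $\rho_T\to0$ a.s. Three of the four terms are controlled directly: the sub-exponential growth bound \eqref{eq:tfbm-growth} with any $\delta\in(0,1)$ makes $e^{-bT}B_{H,\lambda}(T)$ and $e^{-bT}B_{H,\lambda}(T)^2$ tend to $0$, and combining \eqref{eq:int-Y-asymp} with $B_{H,\lambda}(T)/T\to0$ kills the last term. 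The main obstacle is the pathwise stochastic-integral remainder $e^{-bT}\int_0^T e^{bt}(Z_T-Z_t)\,dB_{H,\lambda}(t)$, which resists a naive estimate. I would integrate it by parts: the integrand $f(t)=e^{bt}(Z_T-Z_t)$ satisfies $f(T)=0$ and $f'(t)=be^{bt}(Z_T-Z_t)-B_{H,\lambda}(t)$, so the stochastic integral equals $-b\int_0^T e^{bt}(Z_T-Z_t)B_{H,\lambda}(t)\,dt + \int_0^T B_{H,\lambda}(t)^2\,dt$, turning it into ordinary Lebesgue integrals. Using \eqref{eq:tfbm-growth} to estimate $\abs{Z_T-Z_t}\le\int_t^\infty e^{-bs}\abs{B_{H,\lambda}(s)}\,ds\le C\xi\,e^{-bt}(t^\delta\vee1)$ and $\abs{B_{H,\lambda}(t)}\le(t^\delta\vee1)\xi$, both integrals are bounded by a polynomial in $T$, so after multiplication by $e^{-bT}$ they vanish a.s. This establishes $R_T\to0$ and completes the proof.
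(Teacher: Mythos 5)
Your proposal is correct, and it reaches \eqref{eq:repr-b} by a genuinely different route than the paper. The paper never introduces a stochastic integral with a random integrand: it writes $e^{bT}(\hat b_T-b)=F_T/D_T$, splits $F_T$ into four pieces $F_{1,T},\dots,F_{4,T}$, and grinds each one down using the SDE, the representation \eqref{rep:YT}--\eqref{eq:bYt}, and repeated integration by parts of Lebesgue integrals, until the main term $\sigma V_T(y_0+\frac ab+b\sigma Z_T)$ emerges and everything else is collected into $R_{1,T},R_{2,T},R'_{3,T},R''_{3,T}$. You instead first collapse the numerator to the classical least-squares error form $\sigma M_T$ with $M_T=T\int_0^T Y_t\,dB_{H,\lambda}(t)-(\int_0^T Y_t\,dt)B_{H,\lambda}(T)$ via the chain rule, and only then extract the leading factor $\Phi_T e^{bT}V_T$; this is shorter and makes the structure of the main term transparent. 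The price is that Step 1 uses objects the paper deliberately avoids defining: $\int_0^T Y_t\,dB_{H,\lambda}(t)$ and $\int_0^T B_{H,\lambda}\,dB_{H,\lambda}=\tfrac12 B_{H,\lambda}(T)^2$ are not Young integrals when $H\le\tfrac12$, so you must state explicitly that they are understood via integration by parts (consistently with \eqref{eq:int}); with that convention the chain-rule identity $\tfrac12(Y_T^2-y_0^2)=\int_0^T Y_t\,dY_t$ is an elementary consequence of ordinary calculus applied to the absolutely continuous part of $Y$, and your cancellation goes through. Your treatment of the one nontrivial remainder, $e^{-bT}\int_0^T e^{bt}(Z_T-Z_t)\,dB_{H,\lambda}(t)$, by integrating by parts back to Lebesgue integrals and invoking \eqref{eq:tfbm-growth}, is exactly the right mechanism (it plays the role of the paper's estimate of the last term in \eqref{eq:R''3T_re2}); just note that the bound $\abs{Z_T-Z_t}\le C\xi e^{-bt}(t^\delta\vee1)$ needs $\delta\le1$, which you may of course assume. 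The convergence $D_T\to\tfrac1{2b}\zeta^2$ and the disposal of the remaining three terms match the paper's argument.
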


\begin{proof}
By the definition \eqref{eq:b-est} of the estimator $\hat b_T$,  
\begin{equation}
\label{eq:int-b-asymp}
e^{bT}(\hat b_T - b) = \frac{F_T}{D_T},
\end{equation}
where the denominator $D_T$ is defined by \eqref{eq:Dt}, and the
the numerator $F_T$ has the following form
\begin{align}
F_T &= e^{-bT}(Y_T-y_0)\left(\frac{1}{2}(Y_T+y_0)-\frac{1}{T}\int_0^T Y_t dt\right)
- be^{-bT}\left(\int_0^T Y^2_t dt - \frac{1}{T}\left(\int_0^T Y_t dt\right)^2\right)
\notag\\
&= F_{1,T}+F_{2,T}+F_{3,T}+F_{4,T},
\label{eq:FT}
\end{align}
where
\begin{align*}
F_{1,T} &= \frac{1}{2}e^{-bT}(Y_T-y_0)(Y_T+y_0),
&
F_{2,T} &= -\frac{1}{T}e^{-bT}(Y_T-Y_0)\int_0^T Y_t dt,
\\
F_{3,T} &= -be^{-bT}\int_0^T Y^2_t dt,
&
F_{4,T} &= \frac{b}{T}e^{-bT}\left(\int_0^T Y^2_t dt\right)^2.
\end{align*}
Let us consider each of $F_{i,T}$ separately.
Substituting the right-hand side of the equation \eqref{Main-SDE} instead of the process $Y$, we rewrite the term $F_{1,T}$ as follows:
\begin{align}
F_{1,T} &= \frac{1}{2}e^{-bT}\left(aT + b\int_0^T Y_t dt + \sigma B_{H,\lambda}(T)\right)\left(2y_0+aT+b\int_0^T Y_t dt + \sigma B_{H,\lambda}(T)\right)
\notag\\
&=abTe^{-bT}\int_0^T Y_t dt + y_0be^{-bT}\int_0^T Y_t dt + \frac{1}{2}b^2e^{-bT}\left(\int_0^T Y_t dt\right)^2
\notag\\
&\quad+b\sigma e^{-bT} B_{H,\lambda}(T) \int_0^T Y_t dt + \frac{1}{2}e^{-bT} (aT + \sigma B_{H, \lambda}(T))(2y_0 + aT + \sigma B_{H,\lambda}(T)).
\label{eq:FT1_re}
\end{align}
Note that it follows from \eqref{eq:SDE-solution}, \eqref{eq:int}, and \eqref{eq:ZtUt} that the process Y has the following representation:
\begin{equation}\label{rep:YT}
Y_T = \left(y_0 + \frac{a}{b}\right)e^{bT}-\frac{a}{b}+b\sigma e^{bT}Z_T + \sigma B_{H,\lambda}(T).
\end{equation}
Moreover, expressing $b\int_0^T Y_t dt$ from the equation \eqref{Main-SDE} and using \eqref{rep:YT}, we get
\begin{equation}\label{eq:bYt}
b\int_0^T Y_tdt = Y_T - y_0 -aT-\sigma B_{H,\lambda}(T) = \left(y_0+\frac{a}{b}\right)e^{bT}
+b\sigma e^{bT}Z_T - y_0 -\frac{a}{b}-aT.
\end{equation}
Now we insert \eqref{eq:bYt} into the fourth term in the right-hand side of \eqref{eq:FT1_re} and obtain
\begin{multline}\label{eq:FT1_re2}
F_{1,T} = abTe^{-bT}\int_0^T Y_t dt + y_0be^{-bT}\int_0^T Y_t dt + \sigma \left(y_0+\frac{a}{b}\right) B_{H, \lambda}(T)\\*
+ b\sigma^2 Z_T B_{H, \lambda}(T)+R_{1,T},
\end{multline}
where
\begin{equation*}
R_{1,T} = \frac{1}{2}e^{-bT}(aT+\sigma B_{H,\lambda}(T))(2y_0+aT+\sigma B_{H,\lambda}(T))-\sigma e^{-bT}\left(y_0+\frac{a}{b}+aT\right)B_{H,\lambda}(T).
\end{equation*}
In view of \eqref{eq:tfbm-growth}
\begin{equation}\label{eq:RT2}
R_{1,T} \to 0 \quad \text{a.s., as } T \to\infty.
\end{equation}
Let us consider $F_{2,T}$. By \eqref{Main-SDE},
\begin{align}
F_{2,T} &= -\frac{1}{T}e^{-bT} \left(aT+b\int_0^T Y_t dt + \sigma B_{H,\lambda}(T)\right)\int_0^T Y_t dt
\notag\\
&=-ae^{-bT}\int_0^T Y_t dt - \frac{b}{T}e^{-bT}\left(\int_0^T Y_t dt\right)^2 + R_{2,T},
\label{eq:FT2_re2}
\end{align}
where
\begin{equation}\label{eq:RT2_2}
R_{2,T} = -\frac{\sigma}{T}e^{-bT}B_{H, \lambda}(T) \int_0^T Y_t dt \to 0 \quad \text{a.s., as } T \to\infty,
\end{equation}
due to \eqref{eq:tfbm-growth} and \eqref{eq:int-Y-asymp}.

Further, we transform $B_{3,T}$ using \eqref{Main-SDE} as follows:
\begin{align}
F_{3,T} &= -be^{-bT}\int_0^T Y_t \left(y_0 + at + b\int_0^tY_s ds + \sigma B_{H,\lambda}(t)\right)dt 
\notag\\
&= -by_0e^{-bT}\int_0^T Y_t dt - abe^{-bT}\int_0^T tY_t dt-b^2e^{-bT}\int_0^T Y_t\int_0^t Y_s \,ds\,dt
\notag\\
&\quad- b\sigma e^{-bT}\int_0^T Y_tB_{H,\lambda}(t)dt
\notag\\
&\coloneqq F_{31,T}+F_{32,T}+F_{33,T}+F_{34,T}.
\label{eq:FT3_re2}
\end{align}
Integrating by parts and applying \eqref{eq:bYt} we get 
\begin{align}
F_{32,T} &= -abe^{-bT}\int_0^T t \, d\left(\int_0^tY_s ds\right)
= -ab Te^{-bT} \int_0^T Y_t dt + abe^{-bT}\int_0^T \!\!\int_0^t Y_s ds\,dt 
\notag\\
&= -abTe^{-bT}\int_0^T Y_t dt + ae^{-bt} \int_0^T(Y_t - y_0-at-\sigma B_{H,\lambda}(t))dt
\notag\\
&= -ab Te^{-bT}\int_0^T Y_t dt + ae^{-bT}\int_0^T Y_t dt + R'_{3,T},
\label{eq:FT32_re2}
\end{align}
where 
\begin{equation}\label{eq:RT3_re2}
R'_{3,T} = -ae^{-bT} \int_0^T (y_0 + at + \sigma B_{H,\lambda}(t)) dt \to 0 \quad \text{a.s., as } T \to\infty
\end{equation}
by \eqref{eq:tfbm-growth}.

Due to symmetry of the integrand, it is not hard to see that $\int_0^T\!\!\int_0^t Y_t Y_s dsdt = \frac{1}{2}(\int_0^T Y_t dt)^2$, whence 
\begin{equation}\label{eq:F33_re2}
F_{33,T} = -\frac{b^2}{2}e^{-bT}\left(\int_0^T Y_t dt\right)^2.
\end{equation}

In order to transform $F_{34,T}$, we use \eqref{eq:bYt} and get
\begin{align*}
F_{34,T} &= -b\sigma e^{-bT}\int_0^T B_{H, \lambda}(t) \left(\left(y_0 +\frac{a}{b}\right)e^{bt}-\frac{a}{b}+b\sigma e^{bt}Z_t+\sigma B_{H,\lambda}(t)\right)
\notag\\
&= - b\sigma(y_0 + \frac{a}{b})e^{-bT}\int_0^T e^{bT} B_{H,\lambda}(t)dt + a\sigma e^{-bT}\int_0^T B_{H,\lambda}(t)dt 
\notag\\
&\quad- b^2\sigma^2e^{-bT}\int_0^Te^{bt}B_{H,\lambda}(t)Z_t dt - b\sigma^2 e^{-bT}\int_0^T B^2_{H,\lambda}(t)dt.
\end{align*}
Using integration by parts, we obtain 
\begin{align*}
\int_0^T e^{bt}B_{H,\lambda}(t)Z_t dt &= \int_0^T Z_t d \left(\int_0^t e^{bs}B_{H,\lambda}(s)ds\right) \\
&= Z_T \int_0^T e^{bt} B_{H,\lambda}(t)dt - \int_0^T\!\!\int_0^te^bs B_{H,\lambda}(s)dsdZ_t
\end{align*}
Hence,
\begin{align*}
F_{34,T} &= -b\sigma \left(y_0 +\frac{a}{b}\right)e^{bt}\int_0^T e^{bt}B_{H,\lambda}(t)dt - b^2\sigma^2e^{-bT}Z_T\int_0^Te^{bt}B_{H,\lambda}(t)dt+R''_{3,T},
\end{align*}
where 
\begin{multline}\label{eq:R''3T_re2}
R''_{3,T} = a\sigma e^{-bT} \int_0^T B_{H,\lambda}(t)dt - b^2\sigma^2e^{-bT}\int_0^T B^2_{H,\lambda}(t)dt
\\
+b^2\sigma^2e^{-bT}\int_0^T \!\!\int_0^t e^{bs} B_{H,\lambda}(s)ds dZ_t.
\end{multline}

Recall that by \eqref{eq:ZtUt}--\eqref{eq:Vt}
\begin{align*}
e^{-bT}\int_0^T e^{bt} B_{H,\lambda}(t) dt = U_T = \frac{1}{b} B_{H, \lambda} (T) - \frac{1}{b}V_T.
\end{align*}

Therefore, we can rewrite $F_{34, T}$ as follows:
\begin{align}\label{eq:F34T_re2}
F_{34,T} &= -\sigma \left(y_0 +\frac{a}{b}\right)B_{H,\lambda}(T) - b\sigma^2 B_{H,\lambda}(T)Z_T + \sigma V_T(y_0 +\frac{a}{b}+b\sigma Z_T)+R''_{3,T}.
\end{align}

Note that the first two terms in the right-hand side of \eqref{eq:R''3T_re2} converge to zero a.s., as  $T \to\infty$ due to \eqref{eq:tfbm-growth}. The last term in \eqref{eq:R''3T_re2} also vanishes, because 
\begin{align*}
\lim_{T\to\infty} e^{-bT}\int_0^T\!\! \int_0^t e^{bs}B_{H,\lambda}(s)dsdZ_t &= \lim_{T\to\infty} \frac{\int_0^T \int_0^t e^{bs} B_{H,\lambda}(s)ds e^{-bt}B_{H,\lambda}(t)dt}{e^{bT}} \\
&=\lim_{T\to\infty} \frac{\int_0^T e^{bs} B_{H,\lambda}(s)ds e^{-bT}B_{H,\lambda}(t)}{be^{bT}} = 0 \quad \text{a.s.}
\end{align*}
in view of \eqref{eq:tfbm-growth}.

Hence, 
\begin{equation}\label{eq:RT3}
R''_{3,T} \to 0 \quad \text{a.s., as } T \to\infty.
\end{equation}

Combining \eqref{eq:FT}, \eqref{eq:FT1_re2}, \eqref{eq:FT2_re2}, \eqref{eq:FT3_re2}, \eqref{eq:FT32_re2}, \eqref{eq:F33_re2}, and \eqref{eq:F34T_re2}, we arrive at
\begin{equation}\label{eq:FTres}
F_T = \sigma V_T (y_0 + \frac{a}{b}+b\sigma Z_T)+ \widetilde{R}_T,
\end{equation}
where 
\begin{equation}\label{eq:RTwidetilde}
\widetilde{R}_T = R_{1,T}+R_{2,T}+R'_{3,T}+R''_{3,T} \to 0 \quad \text{a.s., as } T \to\infty,
\end{equation}
by  \eqref{eq:RT2}, \eqref{eq:RT2_2}, \eqref{eq:RT3_re2}, and \eqref{eq:RT3}.

We complete the proof by inserting \eqref{eq:FTres} into \eqref{eq:int-b-asymp} and noticing that 
$R_T \coloneqq \frac{\widetilde{R}_T}{D_T} \to 0$
a.s., as  $T \to\infty$ in view of \eqref{eq:RTwidetilde} and \eqref{eq:Dt}.
\end{proof}

In the next lemma, we express the estimator $\hat a_T$ via the integrals $U_T$ and $V_T$ defined in \eqref{eq:ZtUt}--\eqref{eq:Vt}. This representation also contains random variables $P_T$ and $Q_T$, converging a.s.\ to the constants 1 and 0 respectively.
\begin{lemma}\label{l:delta-a}
For all $T>0$
\[
T (\hat a_T - a) = b\sigma U_T - \sigma V_T P_T + Q_T,
\]   
where
\begin{equation}\label{eq:PT}
P_T \coloneqq  \frac{\left(y_0 + \frac{a}{b} + b\sigma Z_T\right) e^{-bT}\int_0^T Y_t dt}{D_T} - 1 \to 1 
\quad \text{a.s., as } T \to\infty.
\end{equation}
and 
\begin{equation}\label{eq:QT}
Q_T \coloneqq - R_T e^{-bT}\int_0^T Y_t dt \to 0 \quad \text{a.s., as } T \to\infty.
\end{equation}
\end{lemma}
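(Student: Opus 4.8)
The plan is to bypass any direct manipulation of the quotient \eqref{eq:a-est} and instead exploit an exact linear relation coupling the two estimators, which comes from the first least-squares normal equation. First I would verify the purely algebraic identity
\[
T \hat a_T + \Big(\int_0^T Y_t\, dt\Big) \hat b_T = Y_T - y_0,
\qquad T>0:
\]
substituting \eqref{eq:a-est}--\eqref{eq:b-est}, factoring out $(Y_T - y_0)$ and expanding, the two cross terms $\pm\frac12 T(Y_T+y_0)\int_0^T Y_t\,dt$ cancel and the bracket collapses to the common denominator $T\int_0^T Y_t^2\,dt-(\int_0^T Y_t\,dt)^2$, leaving $Y_T-y_0$. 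On the other hand, integrating \eqref{Main-SDE} gives the exact pathwise relation $Y_T - y_0 = aT + b\int_0^T Y_t\,dt + \sigma B_{H,\lambda}(T)$. Subtracting the two identities yields the centered form
\[
T(\hat a_T - a) + \Big(\int_0^T Y_t\, dt\Big)(\hat b_T - b) = \sigma B_{H,\lambda}(T),
\]
so that $T(\hat a_T - a) = \sigma B_{H,\lambda}(T) - (\hat b_T - b)\int_0^T Y_t\,dt$.

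The second step is to insert the representation of $\hat b_T$ furnished by Lemma~\ref{l:delta-b}. Writing $\hat b_T - b = e^{-bT}\big(\sigma V_T(y_0+\tfrac{a}{b}+b\sigma Z_T)/D_T + R_T\big)$ and multiplying by $\int_0^T Y_t\,dt$, I would recognize the factor $(y_0+\tfrac{a}{b}+b\sigma Z_T)e^{-bT}\int_0^T Y_t\,dt / D_T$ as $P_T+1$ by the definition \eqref{eq:PT}, and the leftover $R_T e^{-bT}\int_0^T Y_t\,dt$ as $-Q_T$ by \eqref{eq:QT}. This gives $(\hat b_T - b)\int_0^T Y_t\,dt = \sigma V_T P_T + \sigma V_T - Q_T$. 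Substituting back and using $B_{H,\lambda}(T) = V_T + bU_T$ from \eqref{eq:Vt}, the surplus $\sigma V_T$ combines with $\sigma B_{H,\lambda}(T)$ to give $\sigma(B_{H,\lambda}(T)-V_T)=b\sigma U_T$, producing exactly $T(\hat a_T - a) = b\sigma U_T - \sigma V_T P_T + Q_T$.

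It then remains to check the stated limits, and these are essentially read off from the already-established a.s.\ convergences. For $Q_T\to 0$ a.s.\ I would combine $R_T\to 0$ a.s.\ (Lemma~\ref{l:delta-b}) with $e^{-bT}\int_0^T Y_t\,dt \to \tfrac1b\zeta$ from \eqref{eq:int-Y-asymp}. For $P_T\to 1$ I would pass to the limit factor by factor: $Z_T\to Z_\infty$ so that $y_0+\tfrac{a}{b}+b\sigma Z_T\to\zeta$ by \eqref{eq:zeta}, together with \eqref{eq:int-Y-asymp} and $D_T\to\tfrac1{2b}\zeta^2$ from \eqref{eq:Dt}; the ratio tends to $\zeta\cdot\tfrac1b\zeta\big/\tfrac1{2b}\zeta^2 = 2$, whence $P_T\to 2-1=1$.

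The argument is short and largely obstacle-free; the one genuine insight is spotting the first normal-equation identity that linearly ties $\hat a_T$ to $\hat b_T$, after which all the analytic work is outsourced to Lemma~\ref{l:delta-b} and the limits \eqref{eq:int-Y-asymp}--\eqref{eq:Dt}. The only technical point worth flagging is that passing to the limit in $P_T$ requires $\zeta\neq 0$ a.s., which holds because $\zeta = y_0+\tfrac{a}{b}+b\sigma Z_\infty$ is Gaussian with positive variance; this nondegeneracy is already what legitimizes the limit $D_T\to\tfrac1{2b}\zeta^2$ used above.
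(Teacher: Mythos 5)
Your proposal is correct and follows essentially the same route as the paper's proof: the paper likewise extracts the normal-equation identity $T\hat a_T = Y_T - y_0 - \hat b_T\int_0^T Y_t\,dt$ directly from \eqref{eq:a-est}--\eqref{eq:b-est}, substitutes the integrated SDE and the representation of Lemma~\ref{l:delta-b}, uses $B_{H,\lambda}(T)=bU_T+V_T$, and reads off the limits of $P_T$ and $Q_T$ from \eqref{eq:zeta}, \eqref{eq:Dt}, \eqref{eq:int-Y-asymp}, and \eqref{eq:RT}. Your added remark on the a.s.\ nondegeneracy of $\zeta$ is a correct point that the paper leaves implicit.
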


\begin{proof}
Using \eqref{eq:a-est} and \eqref{eq:b-est} we rewrite $T \hat a_T$ as follows
\begin{align*}
T \hat a_T 
&= \frac{\left(Y_T - y_0\right) \left(T\int_0^T Y_t^2 dt - \left(\int_0^T Y_t dt\right)^2 + \left(\int_0^T Y_t dt\right)^2 -  \frac12 T \left(Y_T+y_0\right) \int_0^T Y_t dt\right)}{T \int_0^T Y_t^2 dt - \left(\int_0^T Y_t dt\right)^2}
\\
&= Y_T - y_0 - \hat b_T \int_0^T Y_t dt.
\end{align*}  
Now expressing $Y_T$ through \eqref{Main-SDE}, we get
\begin{equation}\label{eq:Ta}
T \hat a_T = T a + \sigma B_{H,\lambda}(T) - \left(\hat b_T - b\right)\int_0^T Y_t dt.
\end{equation}
Note that by \eqref{eq:Vt},
$B_{H,\lambda}(T) = b U_T + V_T$.
Using this relation and the representation \eqref{eq:repr-b} we derive from \eqref{eq:Ta} that
\begin{align*}
T \left(\hat a_T - a\right) &= \sigma V_T + b\sigma U_T - \left(\frac{\sigma V_T \left(y_0 + \frac{a}{b} + b\sigma Z_T\right)}{D_T} + R_T \right) e^{-bT}\int_0^T Y_t dt
\\
&= b\sigma U_T - \sigma V_T P_T
+ Q_T.
\end{align*}
Note that
$y_0 + \frac{a}{b} + b\sigma Z_T \to \zeta$,
$D_T \to \frac{1}{2b}\zeta^2$,
$e^{-bT}\int_0^T Y_t dt \to  \frac{1}{b}\zeta$
and $R_T \to 0$
a.s., as $T \to\infty$,
by \eqref{eq:zeta}, \eqref{eq:Dt}, \eqref{eq:int-Y-asymp}, and \eqref{eq:RT} respectively.
This implies the convergences
\eqref{eq:PT} and \eqref{eq:QT}.
\end{proof}

\subsection{Asymptotic normality of \texorpdfstring{$(Z_T, U_T, V_T)$}{(Z, U, V)}}
\label{ssec:as-norm-aux}
The purpose of this subsection is to find a joint asymptotic distribution of the integrals $Z_T$, $U_T$, and $V_T$ as $T\to\infty$.
This distribution is obviously Gaussian, since $Z_T$, $U_T$, and $V_T$ are Gaussian processes.
Therefore, it suffices to calculate the elements of the asymptotic covariance matrix.
This will be done in the following series of lemmas.
The limits contain the constants $\alpha_{H,\lambda}$ and 
$\beta_{H,\lambda,b}$ defined in Theorem~\ref{th:main}.

\begin{lemma}\label{l:var-Z}
The following convergence holds:
\begin{equation}\label{eq:var-Z-asymp}
\lim_{T \to  \infty} \E Z_T^2 = \E Z_\infty^2 = \frac{\beta_{H,\lambda,b}^2}{b^2}.
\end{equation}
\end{lemma}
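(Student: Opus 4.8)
The plan is to split the assertion into two independent parts: the convergence $\E Z_T^2 \to \E Z_\infty^2$, and the explicit evaluation $\E Z_\infty^2 = \beta_{H,\lambda,b}^2/b^2$. For the convergence I would establish that $Z_T \to Z_\infty$ in $L^2$, which is stronger than what is required and immediately yields $\E Z_T^2 \to \E Z_\infty^2$ since $L^2$ convergence entails convergence of $L^2$-norms. Writing $Z_\infty - Z_T = \int_T^\infty e^{-bs} B_{H,\lambda}(s)\,ds$ (legitimate because $Z_\infty$ in \eqref{eq:Zinf} is well defined) and applying the integral form of the Minkowski inequality gives
\[
\left(\E\left(Z_\infty - Z_T\right)^2\right)^{1/2}
\le \int_T^\infty e^{-bs} \left(\E B_{H,\lambda}(s)^2\right)^{1/2} ds
= \int_T^\infty e^{-bs}\, C_s\, s^H\,ds.
\]
By \eqref{eq:TFBMI-asymp} the variance $\E B_{H,\lambda}(s)^2 = C_s^2 s^{2H}$ is continuous in $s$, vanishes as $s\to0$, and converges to $\alpha_{H,\lambda}^2$ as $s\to\infty$; hence it is bounded by a finite constant $M$ on $[0,\infty)$. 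Consequently the right-hand side is at most $\sqrt M \int_T^\infty e^{-bs}\,ds = \sqrt M\, e^{-bT}/b \to 0$, which proves the $L^2$ convergence.

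For the explicit value I would compute directly
\[
\E Z_\infty^2 = \int_0^\infty\!\!\int_0^\infty e^{-b(s+r)} \Cov[B_{H,\lambda}(s), B_{H,\lambda}(r)]\, ds\, dr,
\]
substituting the covariance \eqref{eq:cov}. The two ``variance'' terms $C_s^2 s^{2H}$ and $C_r^2 r^{2H}$ are symmetric; factoring the exponential and integrating out the free variable, together with the prefactor $\frac12$, they contribute $\frac{1}{b}\int_0^\infty e^{-bu} C_u^2 u^{2H}\,du$. For the cross term $-\frac12 C_{\abs{s-r}}^2 \abs{s-r}^{2H}$, I would restrict to the region $s>r$ (doubling by symmetry), substitute $u = s-r$, and interchange the order of integration over $0 \le u \le s < \infty$; the inner integral $\int_u^\infty e^{-2bs}\,ds = e^{-2bu}/(2b)$ yields $-\frac{1}{2b}\int_0^\infty e^{-bu} C_u^2 u^{2H}\,du$. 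Adding the two contributions gives $\E Z_\infty^2 = \frac{1}{2b}\int_0^\infty e^{-bu} C_u^2 u^{2H}\,du$, which by the definition of $\beta_{H,\lambda,b}^2$ in \eqref{eq:alphabeta} equals $\beta_{H,\lambda,b}^2/b^2$.

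The main obstacle is the rigorous justification of Fubini's theorem and of the change of variables in the cross term. I must verify absolute integrability of the integrand over $[0,\infty)^2$, which again follows from the uniform bound $C_u^2 u^{2H} \le M$ combined with the exponential weight $e^{-b(s+r)}$. Once this is secured, every interchange of integration order is legitimate and the remaining manipulations are routine; the rest of the argument is bookkeeping of the three pieces of the covariance.
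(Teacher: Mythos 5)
Your proposal is correct and follows essentially the same route as the paper: the explicit evaluation of $\E Z_\infty^2$ via the covariance formula \eqref{eq:cov}, the symmetry of the cross term, the substitution $u=t-s$, and the interchange of integration order are exactly the paper's computation, yielding $\frac{2}{b^2}\beta_{H,\lambda,b}^2 - \frac{1}{b^2}\beta_{H,\lambda,b}^2$. Your additional Minkowski-inequality argument for the $L^2$-convergence $Z_T\to Z_\infty$ (hence $\E Z_T^2\to\E Z_\infty^2$) is a correct justification of the first equality in \eqref{eq:var-Z-asymp}, which the paper leaves implicit.
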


\begin{proof}
Using the definition \eqref{eq:Zinf} of $Z_\infty$, and the formula \eqref{eq:cov} for the covariance function of TFBM, we may write
\begin{align}
\E Z_\infty^2 &= \E \left(\int_0^\infty e^{-bt} B_{H,\lambda}(t)dt\right)^2
= \int_0^\infty\!\!\int_0^\infty e^{-bt-bs} \E \left[B_{H,\lambda}(t)B_{H,\lambda}(s)\right]dt\,ds
\notag\\
&= \frac12 \int_0^\infty\!\!\int_0^\infty e^{-bt-bs} \left(C_{t}^2 t^{2H}+ C_{s}^2 s^{2H} - C_{\abs{t-s}}^2 \abs {t-s}^{2H}\right)ds\,dt
\notag\\
&= \int_0^\infty e^{-bs} ds \int_0^\infty e^{-bt} C_{t}^2 t^{2H}dt
- \frac12 \int_0^\infty\!\!\int_0^\infty e^{-bt-bs} C_{\abs{t-s}}^2 \abs {t-s}^{2H}\,ds\,dt
\notag\\
&\eqqcolon A_1 + A_2.
\label{eq:A1+A2}
\end{align}

Since $\int_0^\infty e^{-bs} ds = \frac1b$, we see that
\begin{equation}\label{eq:A1}
A_1 = \frac1b \int_0^\infty e^{-bt} C_{t}^2 t^{2H}dt
= \frac{2}{b^2}\, \beta_{H,\lambda,b}^2
\end{equation}
by definition of $\beta_{H,\lambda,b}$, see \eqref{eq:alphabeta}.

Let us consider $A_2$. Due to the symmetry of the integrand, we have
\[
A_2 = - \int_0^\infty\!\!\int_0^t e^{-bt-bs} C_{t-s}^2 (t - s)^{2H}\,ds\,dt
=  - \int_0^\infty\!\!\int_0^t e^{-2bt + bu} C_u^2 u^{2H}\,du\,dt,
\]
where we have used the substitution $u = t - s$ in the inner integral.
Changing the order of integration and integrating w.r.t.\ $t$, we then get
\begin{equation}\label{eq:A2}
A_2 =  - \int_0^\infty \left(\int_u^\infty e^{-2bt} dt \right) e^{bu} C_u^2 u^{2H}\,du
= - \frac{1}{2b} \int_0^\infty e^{-bu} C_u^2 u^{2H}\,du 
= - \frac{1}{b^2} \beta_{H,\lambda,b}^2.
\end{equation}
Combining \eqref{eq:A1+A2}--\eqref{eq:A2}, we obtain \eqref{eq:var-Z-asymp}.
\end{proof}

\begin{lemma} \label{l:var-U}
The following convergence holds:
\begin{equation}\label{eq:var-U-asymp}
\lim_{T \to  \infty} \E U_T^2 = \frac{1}{b^2}\left( \alpha_{H,\lambda}^2-\beta_{H,\lambda,b}^2 \right).
\end{equation}
\end{lemma}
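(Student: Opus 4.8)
The plan is to compute $\E U_T^2$ directly as a double integral and extract its limit term by term, exploiting the convergence of the variance function recorded in \eqref{eq:TFBMI-asymp}. Writing $g(t) \coloneqq C_t^2 t^{2H}$, the covariance formula \eqref{eq:cov} together with the definition \eqref{eq:ZtUt} of $U_T$ gives
\[
\E U_T^2 = \frac12\, e^{-2bT} \int_0^T\!\!\int_0^T e^{b(s+r)}\bigl(g(s) + g(r) - g(\abs{s-r})\bigr)\,ds\,dr.
\]
I would first record the two structural facts that drive the argument: by \eqref{eq:TFBMI-asymp} the function $g$ is nonnegative, continuous, and satisfies $g(t)\to\alpha_{H,\lambda}^2$ as $t\to\infty$, so (using the small-argument behaviour of $K_H$ in \eqref{eq:Ct} near $t=0$) it is bounded on $[0,\infty)$; and by the definition \eqref{eq:alphabeta} of $\beta_{H,\lambda,b}$ the integral $\int_0^\infty e^{-bu} g(u)\,du$ converges to $\frac{2}{b}\beta_{H,\lambda,b}^2$.

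Next I would split the double integral into its three summands. The two terms carrying $g(s)$ and $g(r)$ factorize; for instance the $g(s)$-term equals $\frac12 e^{-2bT}\bigl(\int_0^T e^{bs} g(s)\,ds\bigr)\bigl(\int_0^T e^{br}\,dr\bigr)$. Since $\int_0^T e^{br}\,dr = b^{-1}(e^{bT}-1)$ and, by an Abelian/Laplace argument using $g(s)\to\alpha_{H,\lambda}^2$, one has $e^{-bT}\int_0^T e^{bs} g(s)\,ds \to b^{-1}\alpha_{H,\lambda}^2$, this term tends to $\frac{\alpha_{H,\lambda}^2}{2b^2}$; the $g(r)$-term gives the same by symmetry, so together they contribute $\frac{\alpha_{H,\lambda}^2}{b^2}$.

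For the remaining term I would use the symmetry of the integrand in $s$ and $r$ and, on the region $\{r<s\}$, the substitution $u=s-r$ in the inner integral to reduce it to
\[
-\,e^{-2bT}\int_0^T e^{2bs}\Bigl(\int_0^s e^{-bu} g(u)\,du\Bigr)ds.
\]
Because the inner integral increases to $\frac{2}{b}\beta_{H,\lambda,b}^2$, a second Abelian argument (now with weight $e^{2bs}$, so that $e^{-2bT}\int_0^T e^{2bs} h(s)\,ds \to \frac{1}{2b}\lim_{s}h(s)$ for bounded convergent $h$) shows this term tends to $-\frac{\beta_{H,\lambda,b}^2}{b^2}$. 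Adding the three contributions yields \eqref{eq:var-U-asymp}.

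I expect the only delicate point to be the clean justification of the two Abelian limits, namely that $e^{-cT}\int_0^T e^{cs} h(s)\,ds \to c^{-1}\lim_{s\to\infty} h(s)$ whenever $h$ is bounded and convergent (here $c=b$ with $h=g$, and $c=2b$ with $h(s)=\int_0^s e^{-bu}g(u)\,du$). This is elementary—split the integral at a large threshold and control the tail by boundedness—but it must be stated carefully, since the whole computation hinges on matching the exponential growth of the weights against the limiting values of $g$ and of the $\beta_{H,\lambda,b}^2$-integral.
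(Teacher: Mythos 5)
Your proposal is correct and follows essentially the same route as the paper: the same three-way splitting of the covariance double integral, factorization of the two single-variable terms, and the substitution $u=s-r$ for the cross term. The only cosmetic difference is that the paper justifies the limits $e^{-cT}\int_0^T e^{cs}h(s)\,ds \to c^{-1}\lim_{s\to\infty}h(s)$ via l'H\^opital's rule (and, for the cross term, by explicitly integrating in $t$ after swapping the order of integration), whereas you invoke an equivalent Abelian argument.
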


\begin{proof}
Using the definition \eqref{eq:ZtUt} of $U_T$ and the formula \eqref{eq:cov} for the covariance function of TFBM, we have
\begin{align}
\E U_T^2 &=
\E \left(\exp\{-b T\}\int_{0}^{T}\exp \left\{ b s \right\} B_{H,\lambda} (s)ds\right)^2
\notag\\
&= \frac{1}{2} \exp\{-2b T\}\int_{0}^{T}\!\!\int_{0}^{T}\exp \{ b (s+t)  \}\left[C^2_{t} t^{2H}+C^2_{s} s^{2H}  - C^2_{\abs{t-s}} \abs {t-s}^{2H}\right]ds\,dt
\notag\\
&= \exp\{-2b T\}  \int_{0}^{T}\exp \{ b s \}ds \int_{0}^{T}\exp \{ b t \}C^2_t t^{2H}\,dt
\notag\\
&\quad-\frac{1}{2} \exp\{-2b T\}\int_{0}^{T}\!\! \int_{0}^{T} \exp\{b (s+t) \}C^2_{\abs {t-s}} \abs {t-s}^{2H}ds\,dt
\notag\\
&\eqqcolon B_{1,T} + B_{2,T}.
\label{eq:B1T+B2T}
\end{align}
By the l'H\^opital rule and \eqref{eq:TFBMI-asymp}, we have
\begin{equation}\label{eq:lim-integral}
\lim_{T\to\infty} \frac{\int_{0}^{T} e^{bt} C^2_t t^{2H} dt}{e^{bT}}
= \lim_{T\to\infty} \frac{e^{bT} C^2_T T^{2H}}{b e^{bT}}
= \frac{\alpha_{H,\lambda}^2}{b},
\end{equation}
whence
\begin{equation}\label{eq:B1T}
\lim_{T\to\infty} B_{1,T}
= \lim_{T\to\infty} \frac{1 - e^{-b T}}{b} \cdot \frac{\int_{0}^{T} e^{bt} C^2_t t^{2H} dt}{e^{bT}}
= \frac{\alpha_{H,\lambda}^2}{b^2}.
\end{equation}

Taking into account the symmetry of the integrand, we can represent $B_{2,T}$ in the following form:
\begin{align*}
B_{2,T} &= - \exp\{-2b T\} \int_{0}^{T} \!\!\int_{0}^{t}\exp\{b (s+t) \} C^2_{t-s} (t-s)^{2H}ds\,dt
\\
&= -\exp\{-2b T\}\int_{0}^{T}\!\! \int_{0}^{t}\exp\{b (2t-u) \} C^2_{u} u^{2H}du\,dt,
\end{align*}
where we have used the substitution
$u = t-s$ in the inner integral.
Changing the order of integration and integrating w.r.t.\ $t$,
we obtain
\begin{align*}
B_{2,T} &= - \exp\{-2b T\}\int_{0}^{T} \exp\{-b u \} C^2_{u} u^{2H}\int_{u}^{T}\exp\{2b t \}dt\,du
\\
&= -\frac{1}{2b} \int_{0}^{T} \exp\{-b u \} C^2_{u} u^{2H}du
+ \frac{1}{2b} \exp\{-2b T\}\int_{0}^{T} \exp\{b u \} C^2_{u} u^{2H}du.
\end{align*}
Note that the last term in the right-hand side of the above equality tends to zero due to \eqref{eq:lim-integral}. Therefore
\begin{equation}\label{eq:B2T}
\lim_{T\to\infty} B_{2,T} = -\frac{1}{2b} \int_{0}^{\infty} \exp\{-b u \} C^2_{u} u^{2H}du 
= - \frac{1}{b^2} \beta_{H,\lambda,b}^2,
\end{equation}
by the definition of $\beta_{H,\lambda,b}$, see \eqref{eq:alphabeta}.
Combining \eqref{eq:B1T+B2T}, \eqref{eq:B1T}, and \eqref{eq:B2T}, we conclude the proof.
\end{proof}

\begin{lemma}\label{l:A3}
The following convergence holds:
\begin{equation}\label{eq:A3}
\lim_{T \to  \infty} \E V_T^2 = \beta_{H,\lambda,b}^2.
\end{equation}
\end{lemma}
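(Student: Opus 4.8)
The plan is to exploit the decomposition $V_T = B_{H,\lambda}(T) - b U_T$ from \eqref{eq:Vt}, which reduces the computation of $\E V_T^2$ to three pieces:
\[
\E V_T^2 = \E\left[B_{H,\lambda}(T)\right]^2 - 2b\,\E\left[B_{H,\lambda}(T) U_T\right] + b^2\,\E U_T^2.
\]
The two outer terms are already under control. By \eqref{eq:TFBMI-asymp}, $\E[B_{H,\lambda}(T)]^2 = C_T^2 T^{2H} \to \alpha_{H,\lambda}^2$, while Lemma~\ref{l:var-U} gives $b^2\,\E U_T^2 \to \alpha_{H,\lambda}^2 - \beta_{H,\lambda,b}^2$. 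Thus everything hinges on the cross term $\E[B_{H,\lambda}(T) U_T]$, which is the only genuinely new quantity here.

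To handle the cross term, I would write
\[
\E\left[B_{H,\lambda}(T) U_T\right] = e^{-bT}\int_0^T e^{bs}\,\E\left[B_{H,\lambda}(T) B_{H,\lambda}(s)\right]ds
\]
and substitute the covariance formula \eqref{eq:cov}, using $\abs{T-s} = T-s$ since $s\le T$. This splits the integral into three summands. The summand carrying $C_T^2 T^{2H}$ integrates elementarily to $\frac{1}{2b} C_T^2 T^{2H}(1-e^{-bT})$, which tends to $\frac{1}{2b}\alpha_{H,\lambda}^2$. The summand carrying $C_s^2 s^{2H}$ equals $\frac12 e^{-bT}\int_0^T e^{bs} C_s^2 s^{2H}\,ds$, whose limit $\frac{1}{2b}\alpha_{H,\lambda}^2$ follows at once from the l'H\^opital computation \eqref{eq:lim-integral}. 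The summand carrying $C_{T-s}^2(T-s)^{2H}$, after the substitution $u = T-s$, collapses to $-\frac12 \int_0^T e^{-bu} C_u^2 u^{2H}\,du$, which converges to $-\frac{1}{b}\beta_{H,\lambda,b}^2$ by the very definition of $\beta_{H,\lambda,b}$ in \eqref{eq:alphabeta}. Adding these three limits yields $\E[B_{H,\lambda}(T) U_T] \to \frac{1}{b}\left(\alpha_{H,\lambda}^2 - \beta_{H,\lambda,b}^2\right)$.

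Assembling the three contributions gives
\[
\lim_{T\to\infty}\E V_T^2 = \alpha_{H,\lambda}^2 - 2\left(\alpha_{H,\lambda}^2 - \beta_{H,\lambda,b}^2\right) + \left(\alpha_{H,\lambda}^2 - \beta_{H,\lambda,b}^2\right) = \beta_{H,\lambda,b}^2,
\]
as claimed. I do not expect a serious analytic obstacle, since each individual limit is either immediate or already recorded in \eqref{eq:lim-integral} and \eqref{eq:alphabeta}; the main point requiring care is purely bookkeeping, namely that the three elementary limits combine with the correct signs and powers of $b$ to leave $\beta_{H,\lambda,b}^2$ rather than $\alpha_{H,\lambda}^2$. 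The one structural subtlety worth flagging is that the substitution $u = T-s$ converts the seemingly $T$-dependent kernel $C_{T-s}^2(T-s)^{2H}$ into the stationary integrand $e^{-bu} C_u^2 u^{2H}$, and it is precisely this term that produces the $\beta_{H,\lambda,b}^2$ contribution responsible for the nontrivial cancellation.
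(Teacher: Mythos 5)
Your proposal is correct and follows essentially the same route as the paper: both expand $\E V_T^2$ via $V_T = B_{H,\lambda}(T) - bU_T$, compute the cross term $\E[B_{H,\lambda}(T)U_T]$ from the covariance formula \eqref{eq:cov} with the substitution $u = T-s$, and invoke \eqref{eq:lim-integral}, \eqref{eq:TFBMI-asymp}, and the definition \eqref{eq:alphabeta} together with Lemma~\ref{l:var-U}. The only cosmetic difference is that you pass to the limit in each of the three pieces separately, while the paper first cancels $C_T^2T^{2H}$ against $\E B_{H,\lambda}^2(T)$ algebraically before taking limits; the bookkeeping and the resulting limit $\E[B_{H,\lambda}(T)U_T]\to b^{-1}(\alpha_{H,\lambda}^2-\beta_{H,\lambda,b}^2)$ agree with \eqref{eq:covBU-1}.
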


\begin{proof}
Using \eqref{eq:Vt}, we represent the left-hand side of \eqref{eq:A3} in the following form
\begin{equation}\label{eq:var-V-decomp}
\E V_T^2 =
\E \left(B_{H,\lambda}(T) - b U_T\right)^2
=\E B_{H,\lambda}^2(T) + b^2 \E U_T^2 - 2b \E \left[B_{H,\lambda} (T) U_T\right].
\end{equation}
Next, we transform the third term in the right-hand side of \eqref{eq:var-V-decomp} as follows
\begin{align}
\MoveEqLeft
\E \left[B_{H,\lambda} (T) U_T\right]
=
\exp\{-b T\}\int_{0}^{T}\exp \left\{ b s \right\} \E [ B_{H,\lambda} (T) B_{H,\lambda} (s)]ds 
\notag\\
&= \frac12 \exp\{-b T\}\int_{0}^{T}\exp \left\{ b s \right\} \left[C^2_{T} T^{2H}+C^2_{s} s^{2H}  - C^2_{T-s} (T-s)^{2H}\right] ds
\notag\\
&= \frac12 \exp\{-b T\}C^2_{T} T^{2H}\int_{0}^{T}\exp \left\{ b s \right\} ds
+\frac12\exp\{-b T\}\int_{0}^{T}\exp \left\{ b s \right\} C^2_{s} s^{2H} ds
\notag\\
&\quad - \frac12 \exp\{-b T\}\int_{0}^{T}\exp \left\{ b s \right\}  C^2_{T-s} (T-s)^{2H}ds
\notag\\
&= \frac{1}{2b} C^2_{T} T^{2H} - \frac{1}{2b}\exp\{-b T\}C^2_{T} T^{2H}
+ \frac12 \exp\{-b T\}\int_{0}^{T}\exp \left\{ b s \right\} C^2_{s} s^{2H} ds
\notag\\
&\quad - \frac12 \int_{0}^{T}\exp \left\{ - b z \right\}  C^2_{z} z^{2H}dz.
\label{eq:covBU}
\end{align}
Combining this expression with \eqref{eq:var-V-decomp} and taking into account that $C^2_{T} T^{2H} = \E [B_{H,\lambda}(T)^2]$ we arrive at
\begin{equation}\label{eq:var-V-decomp2}
\begin{split}
\E V_T^2 & = 
b^2 \E U_T^2 + \exp\{-b T\}C^2_{T} T^{2H}
-b \exp\{-b T\}\int_{0}^{T}\exp \left\{ b s \right\} C^2_{s} s^{2H} ds
\\
&\quad +b \int_{0}^{T}\exp \left\{ - b z \right\}  C^2_{z} z^{2H}dz.
\end{split}
\end{equation}
Note that according to \eqref{eq:TFBMI-asymp} the second term in the right-hand side of \eqref{eq:var-V-decomp2} vanishes as $T \to \infty$, while the limits of other terms are already known, see \eqref{eq:var-U-asymp}, \eqref{eq:lim-integral}, and \eqref{eq:alphabeta}.
Therefore, we arrive at
\[
\lim_{T\to\infty} \E V_T^2  = 
\left( \alpha_{H,\lambda}^2-\beta_{H,\lambda,b}^2 \right)
- \alpha_{H,\lambda}^2 + 2 \beta_{H,\lambda,b}^2 = \beta_{H,\lambda,b}^2.
\qedhere
\]
\end{proof}

\begin{lemma}\label{l:cov-ZU} The 
  following asymptotics holds 
\begin{equation*}
\lim_{T \to  \infty} \E \left[Z_T U_T\right] = \frac{\beta_{H,\lambda,b}^2}{b^2}. 
\end{equation*}
\end{lemma}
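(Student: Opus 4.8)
The plan is to compute $\E[Z_T U_T]$ directly as an iterated integral against the covariance kernel \eqref{eq:cov}, exactly as in the proofs of Lemmas~\ref{l:var-Z}--\ref{l:A3}. By the definitions \eqref{eq:ZtUt} of $Z_T$ and $U_T$,
\begin{equation*}
\E[Z_T U_T] = \frac12 e^{-bT} \int_0^T\!\!\int_0^T e^{b(t-s)} \left(C_t^2 t^{2H} + C_s^2 s^{2H} - C_{\abs{t-s}}^2 \abs{t-s}^{2H}\right) ds\,dt,
\end{equation*}
which I would split into three terms $G_{1,T}, G_{2,T}, G_{3,T}$ according to the three summands of the kernel. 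I expect the sum to converge to $\frac{\beta_{H,\lambda,b}^2}{b^2}$, matching the statement.

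The first two terms are \emph{separable}. In $G_{1,T}$ the variable $s$ enters only through $e^{-bs}$, so it factors as $\frac12 e^{-bT} \big(\int_0^T e^{-bs}\,ds\big)\big(\int_0^T e^{bt} C_t^2 t^{2H}\,dt\big)$; using $\int_0^T e^{-bs}\,ds = \frac{1-e^{-bT}}{b}$ together with \eqref{eq:lim-integral} gives $G_{1,T} \to \frac{\alpha_{H,\lambda}^2}{2b^2}$. Symmetrically, $G_{2,T}$ factors with $e^{bt}$ depending only on $t$; using $e^{-bT}\int_0^T e^{bt}\,dt \to \frac1b$ and the identity $\int_0^\infty e^{-bs} C_s^2 s^{2H}\,ds = \frac{2}{b}\beta_{H,\lambda,b}^2$ coming from the definition \eqref{eq:alphabeta}, I get $G_{2,T} \to \frac{\beta_{H,\lambda,b}^2}{b^2}$.

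The cross term $G_{3,T}$ is the main obstacle, because after the substitution $u = \abs{t-s}$ the two exponentials no longer separate. I would split the square $[0,T]^2$ into the triangles $\{t>s\}$ and $\{t<s\}$. On $\{t>s\}$ the factor $e^{b(t-s)}$ becomes $e^{bu}$, and changing the order of integration shows that this region contributes $-\tfrac12 e^{-bT}\int_0^T g(t)\,dt$ with $g(t)=\int_0^t e^{bu}C_u^2 u^{2H}\,du$; applying l'H\^opital's rule together with \eqref{eq:lim-integral} yields the limit $-\frac{\alpha_{H,\lambda}^2}{2b^2}$. On $\{t<s\}$ the factor becomes $e^{-bu}$, so this region contributes $-\tfrac12 e^{-bT}\int_0^T\!\big(\int_0^{T-t} e^{-bu}C_u^2 u^{2H}\,du\big)dt$; since the inner integral is bounded by the convergent $\int_0^\infty e^{-bu}C_u^2 u^{2H}\,du = \frac{2}{b}\beta_{H,\lambda,b}^2$, the whole expression is $O(Te^{-bT}) \to 0$. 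Hence $G_{3,T} \to -\frac{\alpha_{H,\lambda}^2}{2b^2}$.

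Adding the three limits, the $\alpha_{H,\lambda}^2$ contributions from $G_{1,T}$ and $G_{3,T}$ cancel, leaving $\lim_{T\to\infty}\E[Z_T U_T] = \frac{\beta_{H,\lambda,b}^2}{b^2}$, as claimed. The only genuinely delicate point is the asymmetric treatment of the two triangular regions in $G_{3,T}$: the region $\{t>s\}$ produces a growing exponential that must be handled by l'H\^opital, whereas $\{t<s\}$ produces a convergent integral that is annihilated by the prefactor $e^{-bT}$.
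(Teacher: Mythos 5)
Your proof is correct and follows essentially the same route as the paper: the same decomposition of $\E[Z_T U_T]$ into the three covariance-kernel summands, the same separable treatment of the first two (yielding $\tfrac{\alpha_{H,\lambda}^2}{2b^2}$ and $\tfrac{\beta_{H,\lambda,b}^2}{b^2}$), and the same split of the cross term over the two triangles with the $\{t>s\}$ part handled by l'H\^opital and \eqref{eq:lim-integral}. The only cosmetic difference is that for the $\{t<s\}$ triangle you bound the expression directly by $O(Te^{-bT})$, whereas the paper changes the order of integration and applies l'H\^opital once more; both arguments are valid.
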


\begin{proof}
Using formula \eqref{eq:cov} for the covariance function of TFBM, we get 
\begin{align*}
\E \left[Z_T U_T\right] &= e^{-bT} \E \left[\int_0^{T}e^{bt}B_{H,\lambda}(t)dt \int_0^{T}e^{-bs}B_{H,\lambda}(s)ds \right] \\
&=\frac{1}{2}  e^{-bT} \int_0^{T}\!\! \int_0^{T} e^{bt-bs} \left(C_t^2 t^{2H} + C_s^2 s^{2H} - C_{\abs{t-s}}^2 \abs{t-s}^{2H} \right)ds\,dt \\
&=I_{1,T}+I_{2,T}+I_{3,T}+I_{4,T},
\end{align*}
where
\begin{align*}
I_{1,T} =  \frac{1}{2}e^{-bT}\int_0^T e^{bt} C^2_{t} t^{2H}dt \int_0^{T}e^{-bs}ds,
\end{align*}
\begin{align*}
I_{2,T} =  \frac{1}{2}e^{-bT}\int_0^T e^{-bs} C^2_{s} s^{2H}ds \int_0^{T}e^{bt}dt,
\end{align*}
\begin{align*}
I_{3,T} =  -\frac{1}{2}e^{-bT}\int_0^T\!\! \int_0^{t} e^{bt-bs} C^2_{t-s} (t-s)^{2H}ds\,dt,
\end{align*}
\begin{align*}
I_{4,T} =  -\frac{1}{2}e^{-bT}\int_0^T\!\! \int_t^{T} e^{bt-bs} C^2_{s-t} (s-t)^{2H}ds\,dt.
\end{align*}
By \eqref{eq:lim-integral},
\begin{align*}
I_{1,T} =  \frac{1}{2}e^{-bT}\cdot\frac{1-e^{-bT}}{b}\int_0^T e^{bt} C^2_{t} t^{2H}dt \to \frac{\alpha^2_{H,\lambda}}{2b^2},
\quad\text{as } T \to \infty.
\end{align*}
Taking into account \eqref{eq:alphabeta}, we get
\begin{align*}
I_{2,T} =  \frac{1}{2}e^{-bT}\cdot\frac{e^{-bT}-1}{b}\int_0^T e^{-bs} C^2_{s} s^{2H}ds \to \frac{\beta^2_{H,\lambda,\beta}}{b^2},
\quad\text{as } T \to \infty.
\end{align*}
By l'Hopital's rule and \eqref{eq:lim-integral}, we have
\begin{align*}
I_{3,T} =  -\lim_{T \to  \infty} \frac{\int_0^T \!\int_0^t e^{bu}C^2_u u^{2u}du}{2e^{bT}} = -\lim_{T \to  \infty} \frac{\int_0^T e^{bu}C^2_u u^{2u}du}{2be^{bT}} = -\frac{\alpha^2_{H,\lambda}}{2b^2}.
\end{align*}
Changing the order of integration, we obtain
\begin{align*}
I_{4,T} =  -\frac{1}{2}e^{-bT}\int_0^T\!\! \int_0^{s} e^{bt-bs} C^2_{s-t} (s-t)^{2H}dt\,ds =  -\frac{1}{2}e^{-bT}\int_0^T\!\! \int_0^{s} e^{-bu} C^2_{u} u^{2H}du\,ds.
\end{align*}
By l`Hopital's rule and \eqref{eq:alphabeta}, 
\begin{align*}
\lim_{T \to  \infty}I_{4,T} = -\lim_{T \to  \infty} \frac{\int_0^T e^{-bu}C^2_u u^{2u}du}{2be^{bT}} = 0.
\end{align*}
Collecting all the limits completes the proof.
\end{proof}

\begin{lemma} The next value is asymptotically negligible:
\begin{equation*}
\lim_{T \to  \infty} \E \left[Z_T V_T\right] = 0. 
\end{equation*}
\end{lemma}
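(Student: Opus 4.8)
The plan is to exploit the decomposition $V_T = B_{H,\lambda}(T) - b U_T$ from \eqref{eq:Vt}, which splits the quantity of interest into two pieces:
\[
\E \left[Z_T V_T\right] = \E \left[Z_T B_{H,\lambda}(T)\right] - b\,\E \left[Z_T U_T\right].
\]
The second piece is already controlled by Lemma~\ref{l:cov-ZU}, whose limit is $\beta_{H,\lambda,b}^2/b^2$; hence $b\,\E[Z_T U_T] \to \beta_{H,\lambda,b}^2/b$. So the whole problem reduces to computing $\lim_{T\to\infty}\E[Z_T B_{H,\lambda}(T)]$, and the assertion will follow provided this limit also equals $\beta_{H,\lambda,b}^2/b$, so that the two contributions cancel.

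First I would use the definition \eqref{eq:ZtUt} of $Z_T$ together with the covariance formula \eqref{eq:cov} (taking $0 \le s \le T$) to write
\[
\E \left[Z_T B_{H,\lambda}(T)\right]
= \tfrac12 \int_0^T e^{-bs}\bigl(C_T^2 T^{2H} + C_s^2 s^{2H} - C_{T-s}^2 (T-s)^{2H}\bigr)\,ds,
\]
and then split the integral into three terms according to the three summands. The first term factors as $\tfrac12 C_T^2 T^{2H}\cdot \frac{1-e^{-bT}}{b}$ and tends to $\alpha_{H,\lambda}^2/(2b)$ by \eqref{eq:TFBMI-asymp}. The second term converges to $\tfrac12\int_0^\infty e^{-bs} C_s^2 s^{2H}\,ds = \beta_{H,\lambda,b}^2/b$ directly from the definition \eqref{eq:alphabeta} of $\beta_{H,\lambda,b}$.

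For the third term I would substitute $u = T - s$, which turns it into $-\tfrac12 e^{-bT}\int_0^T e^{bu} C_u^2 u^{2H}\,du$; by \eqref{eq:lim-integral} this tends to $-\alpha_{H,\lambda}^2/(2b)$. Summing the three limits, the two $\alpha_{H,\lambda}^2/(2b)$ contributions cancel and we are left with $\lim_{T\to\infty}\E[Z_T B_{H,\lambda}(T)] = \beta_{H,\lambda,b}^2/b$. Subtracting $b\,\E[Z_T U_T] \to \beta_{H,\lambda,b}^2/b$ then yields the claimed limit $0$.

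The computation is entirely routine given the preceding lemmas; the only points requiring care are the change of variables $u=T-s$ in the third term and the recognition that the diverging-looking factor $e^{-bT}\int_0^T e^{bu} C_u^2 u^{2H}\,du$ is controlled precisely by \eqref{eq:lim-integral}, so that the $\alpha_{H,\lambda}^2$ contributions cancel cleanly with no residual error terms to track. I do not anticipate any genuine obstacle beyond this bookkeeping.
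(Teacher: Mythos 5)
Your proof is correct and follows essentially the same route as the paper: both decompose $\E[Z_T V_T] = \E[Z_T B_{H,\lambda}(T)] - b\,\E[Z_T U_T]$ via \eqref{eq:Vt}, compute $\E[Z_T B_{H,\lambda}(T)] \to \beta_{H,\lambda,b}^2/b$ by splitting the covariance integral into three terms (with the same substitution $u=T-s$ and appeal to \eqref{eq:TFBMI-asymp}, \eqref{eq:alphabeta}, and \eqref{eq:lim-integral}), and then cancel against the limit from Lemma~\ref{l:cov-ZU}. No gaps.
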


\begin{proof}
It follows from \eqref{eq:Vt}  that
\begin{equation}\label{l:E_ZV}
\E \left[Z_T V_T\right] = \E \left[Z_T B_{H,\lambda}(T)\right]-b\E \left[Z_T U_T\right]. 
\end{equation}
By \eqref{eq:TFBMI-asymp}, \eqref{eq:alphabeta}, and \eqref{eq:lim-integral}, we obtain the next equalities 
\begin{align}
\E \left[Z_T B_{H,\lambda}(T)\right] &= \E \left[B_{H,\lambda}(T) \int_0^T e^{-bt} B_{H,\lambda}(t)dt\right] 
\notag\\
&=\frac{1}{2} \int_0^{T}e^{-bT}\left(C_T^2 T^{2H} + C_t^2 t^{2H} - C_{T-t}^2 (T-t)^{2H} \right)dt 
\notag\\
&=\frac{1}{2} C_T^2 T^{2H}\cdot\frac{1-e^{-bT}}{b}+\frac{1}{2}\int_0^T e^{-bt} C^2_{t} t^{2H}dt 
\notag\\
&\quad-\frac{1}{2}\int_0^T e^{-b(T-s)} C^2_{s} s^{2H}ds
\notag\\
&\to \frac{\alpha^2_{H,\lambda}}{2b}  + \frac{\beta^2_{H,\lambda,\beta}}{b} - \frac{\alpha^2_{H,\lambda}}{2b} 
=\frac{\beta^2_{H,\lambda,\beta}}{b}, 
\quad \text{as } T \to \infty.
\label{l:E_ZB}
\end{align}
Combining \eqref{l:E_ZV}, \eqref{l:E_ZB} and Lemma \ref{l:cov-ZU}, we conclude the proof.
\end{proof}

\begin{lemma}\label{l:cov-UV} The following value is also negligible: 
\begin{equation*}
\lim_{T \to  \infty} \E \left[U_T V_T\right] = 0. 
\end{equation*}
\end{lemma}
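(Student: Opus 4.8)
The plan is to reduce $\lim_{T\to\infty}\E[U_T V_T]$ to limits that have already been established, exploiting that $V_T$ is a linear combination of $B_{H,\lambda}(T)$ and $U_T$. First I would use the representation $V_T = B_{H,\lambda}(T) - b U_T$ from \eqref{eq:Vt} to split
\[
\E[U_T V_T] = \E\left[U_T B_{H,\lambda}(T)\right] - b\,\E U_T^2.
\]
The second term is immediate: by Lemma~\ref{l:var-U}, $b\,\E U_T^2 \to \frac{1}{b}\left(\alpha_{H,\lambda}^2 - \beta_{H,\lambda,b}^2\right)$ as $T\to\infty$.

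For the first term I would not recompute the double integral from scratch, but instead reuse the expansion already carried out in the proof of Lemma~\ref{l:A3}. There the cross-covariance $\E[B_{H,\lambda}(T) U_T]$ was written in \eqref{eq:covBU} as a sum of four explicit terms. Passing to the limit term by term---using \eqref{eq:TFBMI-asymp} to send $C_T^2 T^{2H}\to\alpha_{H,\lambda}^2$, \eqref{eq:lim-integral} for the exponentially weighted integral, and the definition \eqref{eq:alphabeta} of $\beta_{H,\lambda,b}$ for the remaining integral, while the term carrying the factor $e^{-bT}$ vanishes---yields
\[
\lim_{T\to\infty}\E\left[B_{H,\lambda}(T) U_T\right]
= \frac{\alpha_{H,\lambda}^2}{2b} + \frac{\alpha_{H,\lambda}^2}{2b} - \frac{\beta_{H,\lambda,b}^2}{b}
= \frac{1}{b}\left(\alpha_{H,\lambda}^2 - \beta_{H,\lambda,b}^2\right).
\]

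Combining the two limits, which coincide and both equal $\frac{1}{b}\left(\alpha_{H,\lambda}^2 - \beta_{H,\lambda,b}^2\right)$, gives the claimed cancellation $\lim_{T\to\infty}\E[U_T V_T] = 0$. I expect no genuine obstacle here: once the asymptotics of $\E U_T^2$ and $\E[B_{H,\lambda}(T) U_T]$ are in hand, the argument is a two-line subtraction. The only point deserving care is to cite the already-derived identity \eqref{eq:covBU} rather than reopening the covariance computation, and to match the constants so that the two contributions cancel exactly.
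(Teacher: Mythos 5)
Your proposal is correct and follows exactly the paper's own argument: both decompose $\E[U_T V_T]=\E[B_{H,\lambda}(T)U_T]-b\,\E U_T^2$ via \eqref{eq:Vt}, evaluate the first limit term by term from \eqref{eq:covBU} using \eqref{eq:TFBMI-asymp}, \eqref{eq:lim-integral}, and \eqref{eq:alphabeta}, and cancel against the limit of $b\,\E U_T^2$ from Lemma~\ref{l:var-U}. The constants match, so nothing further is needed.
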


\begin{proof}
From the representation \eqref{eq:covBU} we derive using \eqref{eq:TFBMI-asymp}, \eqref{eq:lim-integral}, and \eqref{eq:alphabeta} that
\begin{equation}\label{eq:covBU-1}
\lim_{T\to\infty}  \E \left[ B_{H,\lambda} U_T\right]
= \frac{\alpha_{H,\lambda}^2}{2b} + 0 + \frac{\alpha_{H,\lambda}^2}{2b} - \frac{\beta_{H,\lambda,b}^2}{b}
=\frac{\alpha_{H,\lambda}^2 - \beta_{H,\lambda,b}^2}{b}
= b \lim_{T \to  \infty} \E U_T^2,
\end{equation}
where the last equality follows from Lemma~\ref{l:var-U}.
Taking into account that
by \eqref{eq:Vt},
\[
\E \left[ U_T V_T \right] = \E \left[ B_{H,\lambda} U_T\right] - b \E U_T^2,
\]
we complete the proof.
\end{proof}

\begin{proposition}\label{prop:ZUV-asymp}
As $T\to\infty$,
\[
\begin{pmatrix}
Z_T \\ U_T \\ V_T
\end{pmatrix}
\dto \ND (\mathbf 0, \Sigma),
\]
where
\[
\Sigma = 
\begin{pmatrix}
b^{-2}\beta_{H,\lambda,b}^2 & b^{-2}\beta_{H,\lambda,b}^2 & 0 \\
b^{-2}\beta_{H,\lambda,b}^2 & b^{-2}\left( \alpha_{H,\lambda}^2-\beta_{H,\lambda,b}^2\right) & 0\\
0 & 0 & \beta_{H,\lambda,b}^2
\end{pmatrix}.
\]
In particular,
\begin{itemize}
    \item $Z_T$ and $V_T$ are asymptotically independent;
    \item $U_T$ and $V_T$ are asymptotically independent.
\end{itemize}
\end{proposition}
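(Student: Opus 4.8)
The plan is to reduce the proposition to the elementary fact that a sequence of Gaussian vectors whose mean vectors and covariance matrices converge also converges in distribution to the Gaussian law with the limiting parameters. Since the genuinely analytic work — the computation of the six asymptotic second moments — has already been carried out in Lemmas~\ref{l:var-Z}--\ref{l:cov-UV}, what remains is essentially an assembly of those limits together with a joint-Gaussianity argument.

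First I would observe that for each fixed $T$ the vector $(Z_T, U_T, V_T)$ is Gaussian. Indeed, $B_{H,\lambda}$ is a Gaussian process, so any finite family of (Riemann) integrals of deterministic kernels against its paths, together with its values at fixed times, is jointly Gaussian. Concretely, $Z_T = \int_0^T e^{-bs}B_{H,\lambda}(s)\,ds$ and $U_T = e^{-bT}\int_0^T e^{bs}B_{H,\lambda}(s)\,ds$ are linear functionals of the path, and $V_T = B_{H,\lambda}(T) - b U_T$ by \eqref{eq:Vt}; hence the triple is obtained from the Gaussian process by linear operations and is therefore Gaussian. Moreover, since $\E B_{H,\lambda}(s) = 0$ for all $s$, the mean vector of $(Z_T, U_T, V_T)$ equals $\mathbf 0$ for \emph{every} $T$, so it already coincides with the mean of the limit.

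Next I would collect the entries of the covariance matrix $\Sigma_T$ of $(Z_T,U_T,V_T)$ and invoke the preceding lemmas. Lemmas~\ref{l:var-Z}, \ref{l:var-U}, and \ref{l:A3} give the diagonal entries $\E Z_T^2 \to b^{-2}\beta_{H,\lambda,b}^2$, $\E U_T^2 \to b^{-2}(\alpha_{H,\lambda}^2-\beta_{H,\lambda,b}^2)$, and $\E V_T^2 \to \beta_{H,\lambda,b}^2$, while Lemma~\ref{l:cov-ZU} and the two subsequent lemmas give the off-diagonal entries $\E[Z_T U_T] \to b^{-2}\beta_{H,\lambda,b}^2$, $\E[Z_T V_T]\to 0$, and $\E[U_T V_T]\to 0$. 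Consequently $\Sigma_T \to \Sigma$ entrywise.

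Finally I would conclude via the characteristic-function criterion: for a centered Gaussian vector the characteristic function is $\varphi_T(u) = \exp(-\tfrac12 u^\top \Sigma_T u)$, and since $\Sigma_T \to \Sigma$ we obtain $\varphi_T(u) \to \exp(-\tfrac12 u^\top \Sigma u)$ for every $u\in\real^3$, which is the characteristic function of $\ND(\mathbf 0,\Sigma)$; L\'evy's continuity theorem then yields the claimed convergence in distribution. The two independence statements follow at once from the block structure of $\Sigma$, since the third coordinate is asymptotically uncorrelated with the first two and, for jointly Gaussian limits, zero covariance is equivalent to independence. I do not expect a genuine obstacle here; the only point requiring slight care is the joint Gaussianity of the triple for fixed $T$, because $V_T$ is defined through the integration-by-parts convention \eqref{eq:int} rather than as a literal Wiener integral — but the identity $V_T = B_{H,\lambda}(T) - bU_T$ makes this immediate.
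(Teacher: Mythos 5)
Your proposal is correct and follows essentially the same route as the paper: the authors likewise assemble the limiting covariance matrix from Lemmas~\ref{l:var-Z}--\ref{l:cov-UV} and conclude from the Gaussianity of the vector $(Z_T,U_T,V_T)$. Your explicit characteristic-function step and the remark on the joint Gaussianity of $V_T$ via $V_T = B_{H,\lambda}(T) - bU_T$ merely spell out details the paper leaves implicit.
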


\begin{proof}
Lemmas \ref{l:var-Z}--\ref{l:cov-UV} together give us the value of the asymptotic covariance matrix. Taking into account the normality of the random vector $(Z_T, V_T, U_T)^\top$, we obtain the desired convergence.
\end{proof}

\subsection{Proof of Theorem \ref{th:main} and convergence (\ref{eq:joint})}
\label{ssec:proof-main}
Let $(\xi_1, \xi_2, \xi_3) \simeq\ND(\mathbf 0, \Sigma)$, where the matrix $\Sigma$ is defined in Proposition \ref{prop:ZUV-asymp}.

$(i)$
By Lemma~\ref{l:delta-a} and Proposition~\ref{prop:ZUV-asymp}, 
\[
T \left(\hat a_T - a\right)
= b\sigma U_T - \sigma V_T P_T + Q_T
\dto b\sigma \xi_2 - \sigma \xi_3,
\quad\text{as } T \to \infty,
\]
where $\xi_2 \simeq \ND (0, b^{-2}(\alpha_{H,\lambda}^2-\beta_{H,\lambda,b}^2))$ and $\xi_3 \simeq \ND (0,\beta_{H,\lambda,b}^2)$ are uncorrelated (hence, independent) Gaussian random variables.
Therefore, the limiting distribution $b\sigma \xi_2 - \sigma \xi_3$ is zero-mean Gaussian with variance
\[
b^2\sigma^2 b^{-2} (\alpha_{H,\lambda}^2-\beta_{H,\lambda,b}^2) + \sigma^2 \beta_{H,\lambda,b}^2
=\sigma^2 \alpha_{H,\lambda}^2.
\]
Thus, \eqref{eq:a-asnorm} is proved.

$(ii)$
According to Lemma~\ref{l:delta-b}, one has the following representation
\[
e^{bt} \left(\hat b_T - b\right) 
=
\frac{\left(y_0 + \frac{a}{b} + b\sigma Z_T\right)^2}{D_T} \frac{\sigma V_T}{y_0 + \frac{a}{b} + b\sigma Z_T} + R_T,
\]
where $R_T\to0$ a.s.\ when $T\to\infty$.
By \eqref{eq:zeta} and \eqref{eq:Dt},
\[
\frac{\left(y_0 + \frac{a}{b} + b\sigma Z_T\right)^2}{D_T}
\to \frac{\zeta^2}{\frac{1}{2b}\zeta^2} = 2b
\quad \text{a.s., as } T \to\infty.
\]
Therefore, we derive from Proposition \ref{prop:ZUV-asymp} and the Slutsky theorem that
\[
e^{bt} \left(\hat b_T - b\right) 
\dto
\frac{2b\sigma \xi_3}{y_0 + \frac{a}{b} + b\sigma \xi_1}
= \frac{\eta_1}{\eta_2},
\]
where
$\eta_1 \coloneqq 2b\sigma \xi_3 \simeq \ND(0, 4 b^2 \sigma^2 \beta_{H,\lambda,b}^2)$
and
$\eta_2 \coloneqq y_0 + \frac{a}{b} + b\sigma \xi_1
\simeq \ND(y_0 + \frac{a}{b}, \sigma^2 \beta_{H,\lambda,b}^2)$
are uncorrelated, hence, independent.
\qed

\appendix
  \section{Asymptotic behavior of drift parameters estimator for the Vasicek model driven by Gaussian process}
  \label{app:es-sebaiy}

  In this appendix we formulate the main result of \cite{Es-Sebaiy} concerning the asymptotic distribution of the parameter estimators in the Gaussian Vasicek-type model and give the comments which conditions of this paper are satisfied and which are not, therefore it was necessary to  modify the respective proofs. 
  So, let $G:=\left\{G_{t}, t \geq 0\right\}$ be a centered Gaussian process satisfying the following assumption
  \begin{itemize}
  \item[$(\mathcal{A}_{1})$] There exist constants $c>0$ and $\gamma \in(0,1)$ such that for every $s, t \geq 0$,
  \[
  G_{0}=0, \quad \E\left[\left(G_{t}-G_{s}\right)^{2}\right] \leq c|t-s|^{2 \gamma}.
  \]
  \end{itemize}
  The Gaussian Vasicek-type process $X=\left\{X_{t}, t \geq 0\right\}$ is defined as the unique (pathwise) solution to
  \begin{equation*}
  X_{0}=0, \quad d X_{t} = \left(a + b X_{t}\right) d t + d G_{t}, \quad t \geq 0.
  \end{equation*}
  where $a \in \mathbb{R}$ and $b>0$ are considered as unknown parameters. The corresponding least-squares estimators have the form
  \begin{equation*}
  \tilde{b}_{T}=\frac{\frac{1}{2} T X_{T}^{2}-X_{T} \int_{0}^{T} X_{s} d s}{T \int_{0}^{T} X_{s}^{2} d s-\left(\int_{0}^{T} X_{s} d s\right)^{2}}
  \quad\text{and}\quad
  \widetilde{a}_{T}=\frac{X_{T} \int_{0}^{T} X_{s}^{2} d s-\frac{1}{2} X_{T}^{2} \int_{0}^{T} X_{s} d s}{T \int_{0}^{T} X_{s}^{2} d s-\left(\int_{0}^{T} X_{s} d s\right)^{2}}.
  \end{equation*}

  The following additional assumptions are required:
 \begin{itemize}
 \item[$(\mathcal{A}_{2})$]
  There exist $\lambda_{G}>0$ and $\eta \in(0,1)$ such that, as $T \rightarrow \infty$
  \begin{equation}\label{eq:condA2}
 \frac{\E\left(G_{T}^{2}\right)}{T^{2 \eta}} \rightarrow \lambda_{G}^{2}.
  \end{equation}

 \item[$(\mathcal{A}_{3})$]
  There exists a constant $\sigma_{G}>0$ such that
  \[
  \lim _{T \rightarrow \infty} \E\left[\left(e^{-b T} \int_{0}^{T} e^{b s} d G_{s}\right)^{2}\right] = \sigma_{G}^{2}.
  \]

 \item[$(\mathcal{A}_{4})$]
 \[
  \lim _{T \rightarrow \infty} \E\left(G_{s} e^{-b T} \int_{0}^{T} e^{b r} d G_{r}\right)=0.
  \]

 \item[$(\mathcal{A}_{5})$]
  For all fixed $s \geq 0$,
  \[
 \lim _{T \rightarrow \infty} \frac{\E\left(G_{s} G_{T}\right)}{T^{\eta}}=0, \quad \lim _{T \rightarrow \infty} \E\left(\frac{G_{T}}{T^{\eta}} e^{-b T} \int_{0}^{T} e^{b r} d G_{r}\right)=0.
 \]
  \end{itemize}

  \begin{theorem}[{\cite[Theorem 3.2]{Es-Sebaiy}}]\label{es-es}
 Assume that
  $\left(\mathcal{A}_{1}\right)$--$\left(\mathcal{A}_{4}\right)$ hold. Suppose that $N_{1} \sim \mathcal{N}(0,1)$, $N_{2} \sim \mathcal{N}(0,1)$ and $G$ are independent. Then as $T \rightarrow \infty$,
  \begin{gather*}
  e^{b T}\left(\widetilde{b}_{T}-b\right) \dto \frac{2 b \sigma_{G} N_{2}}{\mu+\zeta_{\infty}},\\
  T^{1-\eta}\left(\widetilde{a}_{T}-a\right) \dto \lambda_{G} N_{1}.
  \end{gather*}
  Moreover, if $\left(\mathcal{A}_{5}\right)$ holds, then as $T \rightarrow \infty$,
  \begin{align*}
  \left(e^{b T}\left(\widetilde{b}_{T}-b\right), T^{1-\eta}\left(\widetilde{a}_{T}-a\right)\right) \dto \left(\frac{2 b \sigma_{G} N_{2}}{\mu+\zeta_{\infty}}, \lambda_{G} N_{1}\right).
  \end{align*}
  \end{theorem}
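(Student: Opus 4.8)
The plan is to reduce the general Gaussian Vasicek model to the same ratio-of-limits structure that underlies Theorem~\ref{th:main}, but now controlled through the abstract hypotheses $(\mathcal A_1)$--$(\mathcal A_5)$ instead of the explicit TFBM covariance. First I would record the explicit solution $X_t=\mu(e^{bt}-1)+e^{bt}\zeta_t$, with $\mu=a/b$ and $\zeta_t=\int_0^t e^{-bs}\,dG_s$, the integral being taken in the pathwise sense for which the chain rule $\int_0^T X_s\,dX_s=\tfrac12 X_T^2$ holds (the convention of \eqref{eq:int}). Assumption $(\mathcal A_1)$, together with Gaussian concentration, yields a.s.\ polynomial growth of the paths of $G$ and the a.s.\ convergence $\zeta_t\to\zeta_\infty=\int_0^\infty e^{-bs}\,dG_s$; from these I would deduce the a.s.\ limits $e^{-bT}X_T\to\mu+\zeta_\infty$, $\;e^{-bT}\int_0^T X_s\,ds\to b^{-1}(\mu+\zeta_\infty)$, $\;e^{-2bT}\int_0^T X_s^2\,ds\to(2b)^{-1}(\mu+\zeta_\infty)^2$, and hence $T^{-1}e^{-2bT}\Delta_T\to(2b)^{-1}(\mu+\zeta_\infty)^2$, where $\Delta_T\coloneqq T\int_0^T X_s^2\,ds-(\int_0^T X_s\,ds)^2$. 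These are the exact analogues of \eqref{eq:Y-asymp}--\eqref{eq:Dt}.

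Next I would pass from the Cram\'er (normal-equation) form of the estimators to an error representation. Substituting $dX_s=(a+bX_s)\,ds+dG_s$ and using $\int_0^T 1\,dG_s=G_T$, one gets
\[
\tilde b_T-b=\frac{T\int_0^T X_s\,dG_s-G_T\int_0^T X_s\,ds}{\Delta_T},\qquad
\tilde a_T-a=\frac{\int_0^T X_s^2\,ds\,G_T-\int_0^T X_s\,ds\int_0^T X_s\,dG_s}{\Delta_T},
\]
so everything hinges on the asymptotics of $\int_0^T X_s\,dG_s$. Writing $X_s=e^{bs}(\mu+\zeta_s)-\mu$ and $\zeta_s=\zeta_\infty-\rho_s$ with $\rho_s=\int_s^\infty e^{-br}\,dG_r$, I would obtain the key reduction
\[
e^{-bT}\int_0^T X_s\,dG_s=(\mu+\zeta_\infty)S_T-e^{-bT}\int_0^T e^{bs}\rho_s\,dG_s-\mu e^{-bT}G_T,
\]
where $S_T\coloneqq e^{-bT}\int_0^T e^{bs}\,dG_s$ is the martingale appearing in $(\mathcal A_3)$. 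The last term vanishes a.s.; the \emph{main obstacle} is to show that the middle remainder tends to zero (in $L^2$, or a.s.\ by an integration by parts in the spirit of the term $R''_{3,T}$ in Lemma~\ref{l:delta-b}), exploiting that $\rho_s$ is exponentially small precisely where the weight $e^{bs}$ concentrates its mass. This is the abstract counterpart of the algebraic cancellations in Lemma~\ref{l:delta-b} and is where $(\mathcal A_1)$ is essential. Granting it, $e^{-bT}\int_0^T X_s\,dG_s=(\mu+\zeta_\infty)S_T+o_P(1)$.

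Assembling the limits is then routine. For $\tilde b_T$, the numerator scaled by $T^{-1}e^{-bT}$ converges in law to $(\mu+\zeta_\infty)S_T$ (the $G_T\int X_s\,ds$ contribution is $O(T^{-1}G_T)\to0$); dividing by $T^{-1}e^{-2bT}\Delta_T\to(2b)^{-1}(\mu+\zeta_\infty)^2$ and multiplying by $e^{bT}$ gives
\[
e^{bT}(\tilde b_T-b)\dto\frac{2b(\mu+\zeta_\infty)\,\sigma_G N_2}{(\mu+\zeta_\infty)^2}=\frac{2b\sigma_G N_2}{\mu+\zeta_\infty},
\]
since $S_T$ is Gaussian with $\E S_T^2\to\sigma_G^2$ by $(\mathcal A_3)$, whence $S_T\dto\sigma_G N_2$. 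For $\tilde a_T$, a parallel computation yields $T^{1-\eta}(\tilde a_T-a)=T^{-\eta}G_T-2\sigma_G N_2\,T^{-\eta}+o_P(1)$; the second term vanishes, and $(\mathcal A_2)$ gives $\E(G_T^2)/T^{2\eta}\to\lambda_G^2$, so the centered Gaussian $T^{-\eta}G_T\dto\lambda_G N_1$. This establishes the two marginal convergences via the Slutsky theorem.

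Finally, the joint statement follows from the joint Gaussianity of $(\zeta_\infty,S_T,T^{-\eta}G_T)$, all being linear functionals of $G$, so that joint convergence reduces to convergence of the covariance matrix. Assumption $(\mathcal A_4)$, integrated against $e^{-bs}$, forces $\Cov(\zeta_\infty,S_T)\to0$, hence $N_2$ is independent of $\zeta_\infty$; the two parts of $(\mathcal A_5)$ force $\Cov(\zeta_\infty,T^{-\eta}G_T)\to0$ and $\Cov(S_T,T^{-\eta}G_T)\to0$, hence $N_1$ is independent of $\zeta_\infty$ and of $N_2$. Thus the limiting triple is $(\zeta_\infty,\sigma_G N_2,\lambda_G N_1)$ with $N_1,N_2$ independent of each other and of $G$, and a continuous-mapping/Slutsky argument delivers the joint convergence. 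I expect the remainder estimate in the key reduction to be the genuine technical heart of the argument; it is precisely this step that, in the TFBM application of Theorem~\ref{th:main}, must be replaced by the explicit manipulations of Section~\ref{sec:proofs} because not all of the hypotheses $(\mathcal A_1)$--$(\mathcal A_5)$ are available.
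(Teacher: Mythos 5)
This statement is quoted verbatim from \cite[Theorem 3.2]{Es-Sebaiy}; the paper states it in Appendix \ref{app:es-sebaiy} without proof, solely to explain which of $(\mathcal{A}_1)$--$(\mathcal{A}_5)$ fail for TFBM. So the only internal benchmark is the proof of Theorem \ref{th:main} in Section \ref{sec:proofs}, and your argument follows exactly that architecture: solve the SDE, reduce $e^{bT}(\tilde b_T-b)$ to the ratio of $(\mu+\zeta_\infty)S_T$ over $T^{-1}e^{-2bT}\Delta_T$ with $S_T=e^{-bT}\int_0^T e^{bs}\,dG_s$ playing the role of $V_T$, obtain $\tilde a_T$ from the identity $T(\tilde a_T-a)=G_T-(\tilde b_T-b)\int_0^T X_s\,ds$, and finish by joint Gaussianity of the linear functionals of $G$ plus Slutsky. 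Your use of $(\mathcal{A}_2)$ for the $\tilde a_T$ marginal, of $(\mathcal{A}_3)$--$(\mathcal{A}_4)$ for $S_T\dto\sigma_G N_2$ with $N_2$ independent of $\zeta_\infty$, and of $(\mathcal{A}_5)$ only for the joint statement, is correct and consistent with the theorem's hypotheses; so is your observation that $\eta>0$ is what makes the cross term in the $\tilde a_T$ expansion negligible (this is exactly the point made after \eqref{eq:condA2}).

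The one substantive gap is the step you flag but do not carry out: that $e^{-bT}\int_0^T e^{bs}\rho_s\,dG_s\to0$ a.s., where $\rho_s=\int_s^\infty e^{-br}\,dG_r$. Since $\rho_s$ anticipates $dG_s$, this integral is only meaningful through the integration-by-parts convention, and the estimate is genuinely the technical heart. It does close under $(\mathcal{A}_1)$ alone: from $d(e^{bs}\rho_s)=be^{bs}\rho_s\,ds-dG_s$ and $G_0=0$ one gets
\[
e^{-bT}\int_0^T e^{bs}\rho_s\,dG_s=\rho_T G_T- b\,e^{-bT}\int_0^T e^{bs}\rho_s G_s\,ds+\tfrac12\,e^{-bT}G_T^2,
\]
and $(\mathcal{A}_1)$ together with Gaussian concentration (the abstract analogue of \eqref{eq:tfbm-growth}) yields a.s.\ bounds $\abs{G_s}\le\xi(1+s)^{\gamma+\varepsilon}$ and $\abs{\rho_s}\le C\xi e^{-bs}(1+s)^{\gamma+\varepsilon}$, after which all three terms vanish; this is precisely the analogue of the treatment of $R''_{3,T}$ in Lemma \ref{l:delta-b}. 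Two minor points: in the $\tilde a_T$ expansion the subtracted term should be written as $T^{-\eta}\,e^{bT}(\tilde b_T-b)\,e^{-bT}\int_0^T X_s\,ds$, which is bounded in probability times $T^{-\eta}$, rather than the literal $2\sigma_G N_2\,T^{-\eta}$ (which mixes a finite-$T$ quantity with its limit); and passing the limit in $(\mathcal{A}_4)$ inside the integral $b\int_0^\infty e^{-bs}\E[G_sS_T]\,ds$ needs the dominated-convergence bound $\abs{\E[G_sS_T]}\le c\,s^{\gamma}\sup_T(\E S_T^2)^{1/2}$. Neither affects the conclusion.
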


Let us analyze whether the conditions $(\mathcal{A}_{1})$--$(\mathcal{A}_{5})$ are satisfied for our tempered fractional Vasicek model \eqref{Main-SDE}.

We start with the basic condition $(\mathcal{A}_{1})$.
The behavior of the variogram function $\E (B_{H,\lambda}(t) - B_{H,\lambda}(s))^2$ of TFBM was recently studied in \cite{MR24}, where the following upper bounds have been established (see \cite[Lemma~2]{MR24}):
\begin{enumerate}[label=$(\roman*)$]
\item If $H\in(0,1)$, then for all $t, s \in \real_+$
\[
 \E \abs{B_{H,\lambda}(t) - B_{H,\lambda}(s)}^2\leq C \left(\abs{t - s}^{2H}\wedge 1\right).
\]

\item If $H = 1$, then for all $t, s \in \real_+$
\[
 \E \abs{B_{H,\lambda}(t) - B_{H,\lambda}(s)}^2 \leq C \left(\abs{t - s}^2 \bigl\lvert\log\abs{t-s}\bigr\rvert\wedge 1\right).
\]

\item If $H > 1$, then for all $t, s \in \real_+$
\[
 \E \abs{B_{H,\lambda}(t) - B_{H,\lambda}(s)}^2 \leq C (\abs{t - s}^{2}\wedge 1).
\]
\end{enumerate}
Comparison of these bounds with the assumption $(\mathcal{A}_{1})$ shows that this assumption holds only for $H\in(0,1)$. 
For $H>1$ one should choose $\gamma = 1$ in this assumption, which is impossible. More careful analysis of the proofs in \cite{Es-Sebaiy} shows that the condition $\gamma < 1$ is substantial for \cite[Lemma 2.2]{Es-Sebaiy}, which provides the almost sure convergences
\begin{equation}\label{eq:es-l22}
\frac{G_T}{T_\delta} \to 0,
\quad
\frac{e^{-b t}}{T} \int_0^T \abs{G_t X_t} dt \to 0,
\quad T \to \infty,
\end{equation}
for any $\gamma < \delta \le 1$.
Based on this convergence, the authors of \cite{Es-Sebaiy} derive the convergences of the form \eqref{eq:Y-asymp}--\eqref{eq:int-Yt-asymp}, on which the subsequent study of the estimators is based.
Thus, we cannot apply the results of \cite{Es-Sebaiy} directly to the case $G = B_{H,\lambda}$ when $H\ge1$.
However, the almost sure upper bound~\eqref{eq:tfbm-growth} allows us to obtain the first convergence in \eqref{eq:es-l22} for any $\delta > 0$. This bound also makes it possible to derive the second convergence (see the proof of \cite[Lemma 7]{MR24}) as well as the convergences  \eqref{eq:Y-asymp}--\eqref{eq:int-Yt-asymp} \cite[Lemma 6]{MR24}, which in turn lead to the strong consistency of the estimators  \cite[Theorem 6]{MR24}.

Furthermore, for the case $G = B_{H,\lambda}$, the condition $(\mathcal{A}_2)$ is also violated (for any $H>0$). Indeed, in view of \eqref{eq:TFBMI-asymp}, the convergence \eqref{eq:condA2} in $(\mathcal{A}_2)$ holds with $\eta = 0$ instead of $\eta\in(0,1)$. This affects on the behaviour of the estimator $\hat a_T$, which has the following representation:
\[
T^{1-\eta} (\tilde a_T - a) =  - \frac{1}{T^\eta} e^{b T}\left(\tilde b_T - b\right)  e^{-bT}\int_0^T X_t dt + \frac{G_T}{T^\eta}.
\]   
If $\eta > 0$, then the first term of this representation vanishes as $T \to \infty$, and the second one converges to a normal distribution $\ND (0,\lambda_G^2)$. If $\eta = 0$, then both terms have non-trivial limits (in fact, they both converge to normal distributions); hence, the study of the asymptotic behavior of their sum becomes more involved.

The conditions $(\mathcal{A}_3)$ and $(\mathcal{A}_4)$ are satisfied when $G$ is a TFBM. Namely, $(\mathcal{A}_3)$  is verified in Lemma~\ref{l:A3}, and $(\mathcal{A}_4)$ can be checked in a similar way.

Finally, the condition $(\mathcal{A}_5)$ does not hold in the case of TFBM (for all $H>0$).
In particular, for any $s > 0$,
\[
\lim_{T\to\infty} \E \left[B_{H,\lambda} (s) B_{H,\lambda} (T)\right]
= \frac12 C^2_s s^{2H} \ne 0,
\]
and, moreover,
\begin{multline*}
\lim_{T \rightarrow \infty} \E\left[B_{H,\lambda} (T) e^{-b T} \int_{0}^{T} e^{b r} d B_{H,\lambda} (r)\right]
= \lim_{T \rightarrow \infty} \E\left[B_{H,\lambda} (T) V_T\right]
\\
= \lim_{T \rightarrow \infty} \left(\E B_{H,\lambda}^2(T) - b\E\left[B_{H,\lambda} (T) U_T\right]\right)
= \beta_{H,\lambda,b}^2
\ne0.
\end{multline*}
where the limit is computed by \eqref{eq:TFBMI-asymp} and \eqref{eq:covBU-1}.
Thus, both equalities in $(\mathcal{A}_5)$ are not valid.
Consequently, the asymptotic independence of the estimators is not guaranteed in the case of TFBM. In Remark~\ref{rem:joint} we explain the correlation between estimators in more detail.

Additionally, compared to \cite{Es-Sebaiy}, we do not restrict ourselves with zero initial condition allowing it to be any non-random constant $Y_0 = y_0 \in \real$.

\section{Special functions \texorpdfstring{$K_\nu$ and $\hyper$}{K and F}}
\label{app:special}

In this appendix, we present the definitions of the function $K_{\nu}$, which appears in the representation of the covariance function of TFBM (see \eqref{eq:cov}--\eqref{eq:Ct}), and the function $\hyper$ from the representation \eqref{eq:repr-beta-hyp} for the constant $\beta_{H,\lambda,b}^2$.
For further information on this topic, we refer to the book \cite{Andrews1999}.

The \emph{modified Bessel function of the second kind} $K_{\nu}(x)$ has the integral representation
\begin{equation*}
K_{\nu}(x)=\int_0^\infty e^{-x \cosh t} \cosh {\nu t}\ dt,
\end{equation*}
where $\nu>0$, $x>0$. The function $K_{\nu}(x)$ also has the series representation
\begin{equation*}
K_{\nu}(x) = \frac{\pi}{2}\, \frac{ I_{-\nu}(x) - I_{\nu}(x) }{\sin(\pi \nu)},
\end{equation*}
where $I_{\nu}(x)=(\frac{1}{2}|x|)^{\nu} \sum_{n=0}^{\infty} \frac{ ( \frac{1}{2}x)^{2n}  }{n! \Gamma(n+1+\nu)}$ is called the \emph{modified Bessel function of the first kind}.

The \emph{Gauss hypergeometric function} $\hyper(a,b;c;x)$ can be defined for complex $a$, $b$, $c$ and $x$. Here,
we restrict ourselves to the case of real arguments.
Moreover, we assume that $c>b>0$.
In this case, we may define $\hyper(a,b;c;x)$ for $x<1$ by
the following Euler's integral representation \cite[Theorem~2.2.1]{Andrews1999}:
\begin{equation*}
\hyper(a,b;c;x) = \frac{\Gamma(c)}{\Gamma(b)\Gamma(c-b)}
\int_0^1 t^{b-1} (1-t)^{c-b-1} (1-xt)^{-a} \,dt.
\end{equation*}

\bibliographystyle{abbrv}
\bibliography{bibtempered}
\end{document}